\newtheorem{thm}{Theorem}
\newtheorem*{teo}{Theorem}
\newtheorem{lem}{Lemma}
\newtheorem{rem}{Remark}
\newtheorem{prop}{Proposition}
\newtheorem{cor}{Corollary}
\newtheorem{defin}{Definition}
\newcommand{\C}{\mathbb{C}}
\newcommand{\N}{\mathbb{N}}
\newcommand{\de}{\delta}
\newcommand{\rd}{\partial}
\newcommand{\calM}{\mathcal{M}}
\newcommand{\e}{\mathcal{E}}
\newcommand{\ar}{\mathcal{A}}
\newcommand{\re}{\mathbb{R}}
\newcommand{\si}{\Sigma}
\newcommand{\ip}[2]{\bigl\langle #1 , #2 \bigr\rangle}
\newcommand{\ib}[2]{\biggl\langle #1 , #2 \biggr\rangle}
\DeclareMathOperator{\Ker}{Ker}
\newcommand{\ind}{\operatorname{ind}}
\newcommand{\n}{\operatorname{nul}}
\newcommand{\ed}{\operatorname{End}}
\newcommand{\jac}{\operatorname{Jac}}
\newcommand{\im}{\operatorname{im}}
\newenvironment{dem1}[1][\noindent \textit{Proof of Theorem \ref{t1}. }]{#1}{\hfill$\square$ \vspace{0.3cm}}
\newenvironment{dem2}[1][\noindent \textit{Proof of Theorem \ref{t2}. }]{#1}{\hfill$\square$ \vspace{0.3cm}}
\newenvironment{dem4}[1][\noindent \textit{Proof of Theorem \ref{t4}. }]{#1}{\hfill$\square$ \vspace{0.3cm}}
\title{Bounds for the Morse index of free boundary minimal surfaces}
\author{Vanderson Lima}
\address{Instituto de Matem\'atica e Estat\'istica\\ Universidade Federal do Rio Grande do Sul\\ Porto Alegre - RS}
\email{vanderson.lima@ufrgs.br}
\begin{document}

\maketitle

\begin{abstract}

Inspired by work of Ejiri-Micallef on closed minimal surfaces, we compare the energy index and the area index of a free-boundary minimal surface of a Riemannian manifold with boundary, and show that the area index is controlled from above by the area and the topology of the surface. Combining these results with work of Fraser-Li, we conclude that the area index of a free-boundary minimal surface in a convex domain of Euclidean three-space, is bounded from above by a linear function of its genus and its number of boundary components. We also prove index bounds for submanifolds of higher dimension.

\end{abstract}

\providecommand{\abs}[1]{\lvert#1\rvert}

\linespread{1} % Espaçamento entre as linhas

\section{Introduction}

Consider a minimal immersion $u: \si \to \mathcal{M}$ of a compact surface $\si$ into a Riemannian manifold $\calM$. We say that $u$ is {\it free-boundary}, if $u(\rd\si) \subset \mathcal{L}$ with $u(\rd\si)$ orthogonal to $\mathcal{L}$, for some embedded submanifold $\mathcal{L} \subset \calM$ of dimension greater than one. The {\it area index} of $u$ is defined as its Morse index when we see $u$ as critical point of the area functional for variations that map $\rd\si$ into $\mathcal{L}$. Roughly speaking, this quantity measures the maximal number of distinct local deformations that decrease the area to second-order. Unfortunately, in general this quantity is difficult to compute, and the best one can expect is to estimate it from above or below. On this work we are interested in upper bounds estimates.

A minimal immersion can also be seen as a critical point of the energy functional (a {\it harmonic map}), and conversely a conformal harmonic map is a minimal immersion. It turns out that this interesting fact is very useful: one way to obtain a minimal surface in some ambient space is first prove the existence of a harmonic map on each conformal class of a abstract surface, and then varying the conformal structure to obtain a conformal harmonic map. This approach comes from the solution of the Plateau problem \cite{Do,T.T}, and was extend to the case of closed surfaces \cite{S.Y,S.U1,S.U2,Z2}, and to free-boundary case \cite{M.Y,St,F1,C.F.P}, see also \cite{J2}.

On the other hand, a conformal harmonic map has also an Morse index associated to it, which we call the {\it energy index}. This quantity is somehow easier to compute than the {\it area index}, which can be explained by the fact that the second variation formula of energy has a expression simpler than the second variation formula of area. On this context, Ejiri and Micallef compared the two indices for a closed minimal surface (where $\mathcal{L} = \emptyset$) and obtained the following result:

\begin{teo}[Ejiri-Micallef, \cite{E.M}]
Let $u: \si^2 \to \calM^n$ be a minimal immersion of a closed oriented surface $\si$ of genus $g$, in a Riemannian manifold $\calM^n$, where $n\geq 3$. Then the area index $\ind_{\ar}(u)$, and the energy index $\ind_{\e}(u)$, satisfy the following:
\begin{enumerate}
\item $\ind_{\e}(u) \leq \ind_{\ar}(u) \leq \ind_{\e}(u) + \upsilon(g)$;\\
\item $\ind_{\e}(u) \leq c(\calM)|u(\si)|$;
\end{enumerate}
where $c(\calM) > 0$ is a constant depending only on $\calM$, and $$\upsilon(g) = \left\{
\begin{array}{rl}
0,& \textrm{if} \ \ g = 0,\\\\
1,& \textrm{if} \ \ g = 1,\\\\
6g - 6,& \textrm{if} \ \ g \geq 2.
\end{array} \right.$$
\end{teo}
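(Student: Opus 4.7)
My plan is to address the two inequalities of part (1) separately, and then handle (2) via a localization and eigenvalue-counting argument. The common starting point is to write out both Hessians explicitly at the conformal harmonic map $u$. For a section $V \in \Gamma(u^*T\calM)$,
$$\delta^2 E(V,V) = \int_\si \Bigl(|\nabla V|^2 - \sum_i \ip{R^{\calM}(V,u_*e_i)u_*e_i}{V}\Bigr) dA,$$
while for a normal $V^\perp \in \Gamma(N\si)$,
$$\delta^2 A(V^\perp,V^\perp) = \int_\si \Bigl(|\nabla^\perp V^\perp|^2 - |A_{V^\perp}|^2 - \sum_i \ip{R^{\calM}(V^\perp,u_*e_i)u_*e_i}{V^\perp}\Bigr) dA.$$
The pointwise identity $|\nabla V^\perp|^2 = |\nabla^\perp V^\perp|^2 + |A_{V^\perp}|^2$ yields the relation $\delta^2 E(V^\perp,V^\perp) = \delta^2 A(V^\perp,V^\perp) + 2\int_\si |A_{V^\perp}|^2 dA$, which will feed both comparisons.

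For $\ind_\e(u) \leq \ind_\ar(u)$, I would decompose $V = V^\perp + V^T$ and lean on two facts. First, the classical inequality $E(u,g) \geq A(u(\si))$ for every metric $g$ in the conformal class, together with equality at the induced metric, forces $\delta^2 E(V^T,V^T) \geq 0$ along tangential variations $V^T = u_*X$ coming from diffeomorphism flows. Second, a computation using the harmonic map equation should make the cross terms $\delta^2 E(V^\perp,V^T)$ vanish. Then on any subspace $W \subset \Gamma(u^*T\calM)$ on which $\delta^2 E < 0$, the projection $V \mapsto V^\perp$ is injective (its kernel consists of tangential vectors, where $\delta^2 E$ is nonnegative), and each image $V^\perp$ satisfies $\delta^2 E(V^\perp,V^\perp) < 0$, hence, by the identity above, $\delta^2 A(V^\perp,V^\perp) < 0$. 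This produces an area-negative subspace of dimension $\ind_\e(u)$.

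For the reverse inequality $\ind_\ar(u) \leq \ind_\e(u) + \upsilon(g)$, the strategy is to start from a maximal area-negative subspace $W^\perp \subset \Gamma(N\si)$ and to lift each $V^\perp \in W^\perp$ to an energy-negative section $V^\perp + V^T$ by choosing a suitable tangential compensation $V^T$. Identifying the tangential deformations with infinitesimal variations of the conformal structure on $\si$ modulo conformal automorphisms, the obstruction to this lift lives in a finite-dimensional space whose real dimension is exactly $\upsilon(g)$: by Riemann--Roch it equals $6g-6$ for $g \geq 2$, it vanishes for $g=1$ since holomorphic quadratic differentials and conformal automorphisms of the torus balance, and it contributes a single residual dimension for $g=0$ from the conformal group of $S^2$. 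Pinning down precisely this obstruction in each topological case, through a careful analysis of the linearized conformality equation and the Weil--Petersson pairing, is the step I expect to be the main obstacle.

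For (2), I would bound $\ind_\e(u)$ by counting small eigenvalues of the energy Jacobi operator $J_E V = -\Delta V - \sum_i R^{\calM}(V,u_*e_i)u_*e_i$ acting on sections of $u^*T\calM$ with respect to the induced metric. Since the curvature term is controlled pointwise by a constant $K(\calM)$ depending only on the ambient geometry, $\ind_\e(u)$ is at most the number of eigenvalues of $-\Delta$ on $\Gamma(u^*T\calM)$ below $K(\calM)$. I would then run a covering-plus-local-stability argument: cover $u(\si)$ by geodesic balls in $\calM$ of radius so small that sections compactly supported on each ball are stable for $J_E$, observe that the number of such balls is linear in $|u(\si)|$ by a volume comparison in $\calM$, and use a Courant-style localization to conclude $\ind_\e(u) \leq c(\calM)|u(\si)|$.
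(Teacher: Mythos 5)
Your argument for the easy half of (1) hinges on the claim that the cross terms $\de^2\e(V^{\perp},V^{T})$ vanish at a harmonic map, and this is false in general: the energy Hessian does not split along the tangential/normal decomposition. For instance, for a minimal hypersurface of a space form and $\xi=fN$, one computes $(\Delta\xi+\mathcal{R}(\xi))^{\top}=-2A(\nabla f)$, which is nonzero unless $u$ is totally geodesic, so $\de^2\e(\xi,u_{*}X)=-\int_{\si}\ip{\Delta\xi+\mathcal{R}(\xi)}{u_{*}X}\,dA\neq 0$ in general; the failure of exactly this splitting is why the reparametrization term $D^{1,0}X^{0,1}+(\nabla^{1,0}\xi)^{\top}$ appears in the first place. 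The inequality $\ind_{\e}(u)\le\ind_{\ar}(u)$ has a one-line proof, which is the one used in the paper: $\ar\le\e$ for maps of surfaces with equality at the conformal harmonic map $u$, and first variations vanish, hence $\de^2\ar(V)\le\de^2\e(V)$ along any variation; since $\de^2\ar$ sees only $V^{\perp}$, an energy-negative subspace projects injectively onto an area-negative one. For the harder inequality you only state a plan and yourself flag the decisive step as open; moreover the genus bookkeeping is off. The obstruction produced by the Fredholm alternative for $D^{1,0}X^{0,1}=-(\nabla^{1,0}\xi)^{\top}$ is the cokernel of $D^{1,0}$, i.e.\ the holomorphic sections of $T^{0,1}\si\otimes\Lambda^{1,0}\si$ (holomorphic quadratic differentials): this space is zero on the sphere, has real dimension two on the torus (reducing the contribution to $1$ requires a separate argument — in the paper's free-boundary analogue this is Clifford's theorem), and has real dimension $6g-6$ for $g\ge 2$. ``Balancing against conformal automorphisms'' plays no role: the kernel of $D^{1,0}$ does not enter the solvability criterion, only the cokernel does. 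The paper's route is the exact identity $\de^2\e(X+\xi)=\de^2\ar(\xi)+8\int_{\si}\bigl|D^{1,0}X^{0,1}+(\nabla^{1,0}\xi)^{\top}\bigr|^{2}$ followed by Riemann--Roch applied to $\bar{\rd}$; this is the content you would still need to supply.

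For (2) your covering-plus-Courant scheme has a genuine gap. First, the number of ambient balls of a fixed small radius needed to cover $u(\si)$ is bounded by a constant depending only on $\calM$, not by the area; to get a count proportional to $|u(\si)|$ one must count components of the preimages $u^{-1}(B)$ via the monotonicity formula, which your sketch does not do. More seriously, stability of sections compactly supported in extrinsically small pieces (a Dirichlet statement) does not bound the global number of low eigenvalues: domain decomposition controls the counting function only with Neumann-type conditions on the pieces, and the number of small Neumann eigenvalues of pieces with large area and uncontrolled boundary is precisely what you cannot bound a priori, so the ``Courant-style localization'' does not close. The paper (following Cheng--Tysk and Li--Yau) argues instead as follows: $\ind_{\e}(u)$ is bounded by the number of eigenvalues of the rough Laplacian on $u^{*}(T\calM)$ below $\rho=\sup_{\calM}|\mathcal{R}|$; the bundle heat kernel is dominated pointwise by the scalar heat kernel via a Kato-type inequality; and the scalar heat trace is bounded by a multiple of $|u(\si)|$ by plugging the kernel into the Sobolev inequality for minimal submanifolds (Michael--Simon, Hoffman--Spruck) and solving the resulting differential inequality, after which optimizing in $t$ gives $\ind_{\e}(u)\le c(\calM)|u(\si)|$. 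Some ingredient of this type — a Sobolev or isoperimetric inequality for minimal surfaces, or an equivalent heat-kernel bound — is what your proposal is missing.
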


\begin{rem}
In \cite{E.M}, the authors also allow the map to have branch points, and on this case the quantity $\upsilon$ also depends on the number of branch points. 
\end{rem}

The main goal of this work is to extend this previous result for the case of free-boundary minimal surfaces in Riemannian manifolds with boundary. Our first result is the following.

\begin{thm}\label{t1}
Let $u: (\si^2,\rd\si^2) \to (\mathcal{M}^n,\mathcal{L}^k)$ be a free-boundary minimal immersion, where $\si$ is a compact oriented surface of genus $g$ and $m$ boundary components, and $(\mathcal{M}^n,\mathcal{L}^k)$ are as above with $n \geq 3$ and $2 \leq  k \leq n-1$. Then the area index $\ind_{\ar}(u)$, and the energy index $\ind_{\e}(u)$, satisfy the following:
\begin{equation}\label{indine}
\ind_{\e}(u) \leq \ind_{\ar}(u) \leq \ind_{\e}(u) + \upsilon(g,m),
\end{equation}
where $\upsilon(g,m) = \left\{
\begin{array}{rl}
0,& \textrm{if} \ \ g=0 \ \textrm{and}\ m=1,\\\\
1,& \textrm{if} \ \ g = 0 \ \textrm{and}\ m=2,\\\\
6g - 6 + 3m,& \textrm{in the other cases}.
\end{array} \right.$
\end{thm}

To prove this result we follow the approach of \cite{E.M}, the idea is the following: given a deformation of a minimal immersion that decreases the area, if the second variation of area and energy along the respective variational vector fields do not coincide, then one tries to reparametrize the family of maps in order to obtain conformal maps, so that the new variations will coincide. Next one uses the Riemann-Roch theorem to count in how many ways one can do that. In our setting some difficulties arise due to the presence of the boundary, in particular we need a Riemann-Roch theorem on the case of surfaces with boundary. It turns out that there is a index-theoretic version of this theorem for Riemann surfaces with boundary, which is suitable to apply in our context.

Adapt the second part of the theorem of Ejiri-Micallef is more subtle. We follow the methods of Cheng and Tysk in \cite{C.T}, which are inspired by \cite{L.Y}. The idea is to control the index from above by the trace heat Kernel of an appropriated Schr\"odinger operator, related to the {\it Jacobi operator} of the immersion, and then to bound this kernel by the area. To do this, in \cite{C.T} the authors  plug the trace kernel in the Sobolev inequality for minimal surfaces. To our knowledge, there is no version of the Sobolev inequality for free-boundary minimal surfaces of general spaces. However, on the case of free-boundary hypersurfaces of some domain of the Euclidean space, Edelen obtained such inequality, see \cite{E}. We will adapt the work in \cite{E} and prove the free-boundary version of the Sobolev inequality for submanifolds (of any codimension) is valid in a large class of ambient spaces, see section \ref{sec4}. This result is of independent interest.

Following this we have.

\begin{thm}\label{t2}
Let $u: (\si^2,\rd\si^2) \to (\mathcal{M}^n,\rd\calM^n)$ be a free-boundary minimal immersion of an oriented compact surface $\si$, into a compact Riemannian manifold $\mathcal{M}^n$ with boundary $\rd \mathcal{M}^n$, where $n \geq 3$. Then, there is a constant $c = c(\mathcal{M}) > 0$ such that
\begin{equation}
\ind_{\e}(u) \leq c|u(\si)|
\end{equation}
and so,
\begin{equation}
\ind_{\ar}(u) \leq c|u(\si)| + \upsilon(g,m).
\end{equation}
\end{thm}

Since the {\it free-boundary Sobolev inequality} holds in great generality, the methods of \cite{C.T} also allows us to obtain bounds for the index of free-boundary submanifolds of higher dimension. 

\begin{thm}\label{t3}
Consider an compact oriented Riemannian manifold $\si^n$ with boundary $\rd\si$, and a compact Riemannian manifold $\mathcal{M}^{n+k}$ with boundary $\rd \mathcal{M}$, where $n\geq 3$ and $k \geq 1$. Let $u: (\si,\rd\si) \to (\mathcal{M},\rd\calM)$ be a free-boundary minimal immersion. Then there are constants $c_{l} = c(n,\calM)$, $l = 1,2$ such that
\begin{align}
\ind_{\e}(u) &\leq c_1\int_{\si}\Bigl(\max\bigl\{1,|\mathcal{R}|\bigr\}\Bigr)^{\frac{n}{2}}\, dA,\\
\ind_{\ar}(u) &\leq c_2\int_{\si}\Bigl(\max\bigl\{1,|\mathcal{R}| + |\mathrm{II}_{\si}|^2\bigr\}\Bigr)^{\frac{n}{2}}\, dA,
\end{align}
where $\mathcal{R}$ is defined in terms of the curvature of $\calM$ (equation \eqref{curv.oper}), and $\mathrm{II}_{\si}$ is the second fundamental form of $\si$.
\end{thm}

Using the same techniques we also obtain upper bounds for the relative Betti numbers of a free-boundary $\si$ minimal submanifold (see the end of section \ref{sec5}).

\begin{rem}
Examining the proof of the Sobolev inequality in \cite{E} and the proof of Theorems \ref{t2} and \ref{t3}, we see that the constant $c(\calM)$ only depends on the second fundamental form of $\partial\calM$ and the dimension $n$. 
\end{rem}

\begin{rem}
The conclusion of theorems \ref{t2} and \ref{t3} are still valid if $\calM$ is non-compact but satisfies the free-boundary Sobolev inequality (Definition \ref{FBSI}) and the norm of the second fundamental of $\rd\calM$ is bounded (see Theorem \ref{taux}).
\end{rem}

Combining the Theorem \ref{t2} with the area estimates in the work of Fraser and Li \cite{F.L} we obtain the following application.

\begin{thm}\label{t4}
Let $\mathcal{M}^3$ be a compact $3$-dimensional Riemannian manifold with nonnegative Ricci curvature and convex boundary. Let $\si \subset \mathcal{M}$ be an oriented properly embedded free boundary minimal surface of genus $g$ and $m$ boundary components. Then, there is a constant $c(\mathcal{M}) > 0$ such that the area index satisfies
$$\ind_{\ar}(\si) \leq c(\mathcal{M})(g+m).$$
\end{thm}

This previous theorem is interesting because on such ambient spaces there are existence results: one has the existence of free-boundary minimal disks and annuli, \cite{G.J,M.N.S} and of surfaces of controlled topology \cite{DL.R}. Also, extending the Almgreen-Pitts min-max theory to the free-boundary setting and using the methods of \cite{M.N}, Li and Zhou proved in \cite{L.Z} that manifolds satisfying the conditions of the previous theorem contains infinitely many (geometrically distinct) free-boundary minimal surfaces. Finally, on the special case of the Euclidean $3$-ball, we have various examples with known topology, see \cite{F.S1,F.S2,F.P.Z,K,K.L,K.W}.

On the other hand, in the case of convex domains of $\mathbb{R}^3$ we have lower bounds for the {\it area index}:

\begin{teo}[Ambrozio-Carlotto-Sharp \cite{A.C.S3}; Sargent \cite{S}]
Let $\si$ be a orientable properly embedded free boundary minimal surface of genus $g$ and $m$ boundary components, on a convex domain of $\mathbb{R}^3$. Then the area index satisfies
$$\frac{1}{3}(2g + m -1) \leq \ind_{\ar}(\si).$$
\end{teo}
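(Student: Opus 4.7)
The plan is to follow the harmonic one-form technique pioneered by Ros and extended by Savo, Urbano and Ambrozio-Carlotto-Sharp to the free boundary setting. For a compact oriented surface $\si$ of genus $g$ with $m\geq 1$ boundary components, Hodge theory with absolute (Neumann) boundary conditions on $\si$ identifies $H^{1}(\si,\mathbb{R})$ with the space $\mathcal{H}$ of smooth harmonic $1$-forms $\omega$ on $\si$ satisfying $\iota_{\eta}\omega=0$ along $\rd\si$, so that $\dim\mathcal{H}=2g+m-1$. The goal is to turn $\mathcal{H}$ into an essentially injective supply of unstable directions for the area Jacobi form.

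Fix a unit normal $\nu$ to $\si\subset\rt$, and for each $\omega\in\mathcal{H}$ and each standard coordinate vector $e_{i}$ of $\rt$ ($i=1,2,3$) form the scalar test function
$$u_{\omega,i}:=\omega(e_{i}^{\top})\in C^{\infty}(\si),$$
where $e_{i}^{\top}$ is the tangential projection of $e_{i}$ onto $T\si$. Each $u_{\omega,i}\nu$ is an admissible normal test section for the area Jacobi form
$$Q(\varphi\nu,\varphi\nu)=\int_{\si}\bigl(|\nabla\varphi|^{2}-|B_{\si}|^{2}\varphi^{2}\bigr)\,dA-\int_{\rd\si}h^{\rd\mathcal{M}}(\nu,\nu)\,\varphi^{2}\,ds.$$
The algebraic identity $\sum_{i=1}^{3}u_{\omega,i}^{2}=|\omega|^{2}$ pins down the boundary integral in the summed Jacobi form; the main calculation is then to prove
$$\sum_{i=1}^{3}Q(u_{\omega,i}\nu,u_{\omega,i}\nu)=-\int_{\rd\si}h^{\rd\mathcal{M}}(\nu,\nu)\,|\omega|^{2}\,ds,$$
which one obtains by integrating by parts in the interior term using the classical identity $|\nabla\nu|^{2}=|B_{\si}|^{2}$ for the Gauss map of a minimal surface in $\rt$, the Weitzenb\"ock formula for harmonic $1$-forms on $\si$ combined with the Gauss equation $K_{\si}=-\tfrac{1}{2}|B_{\si}|^{2}$, and the tangential condition $\iota_{\eta}\omega=0$, which kills the spurious boundary contributions. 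Convexity of $\rd\mathcal{M}$ then makes the right-hand side non-positive.

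Set $k=\ind_{\ar}(\si)$ and let $V\subset\Gamma(N\si)$ be a $Q$-nonnegative subspace of codimension $k$. The linear map
$$\mathcal{H}\longrightarrow\bigl(\Gamma(N\si)/V\bigr)^{3},\qquad \omega\longmapsto\bigl(u_{\omega,1}\nu,u_{\omega,2}\nu,u_{\omega,3}\nu\bigr)\ \mathrm{mod}\ V^{3}$$
has image of dimension at most $3k$, so its kernel $\mathcal{U}$ satisfies $\dim\mathcal{U}\geq 2g+m-1-3k$. For $\omega\in\mathcal{U}$, the sum of Jacobi forms is non-negative, which combined with the displayed identity forces $\omega|_{\rd\si}\equiv 0$ on every part of $\rd\si$ where $h^{\rd\mathcal{M}}(\nu,\nu)>0$; a unique-continuation argument on $\si$ then rules out nonzero $\omega\in\mathcal{U}$, yielding $2g+m-1\leq 3k$ as claimed.

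The main obstacle is the integration-by-parts identity: one must keep careful track of the boundary terms generated simultaneously by the free-boundary condition on $\si$ (which relates $\nabla_{\eta}\nu$ to $h^{\rd\mathcal{M}}$) and by the tangential condition on $\omega$, so that they combine with the Robin term in $Q$ to leave exactly the stated right-hand side and no residual interior term. A secondary subtlety, handled by a perturbation argument, arises in the degenerate situation of merely weakly convex $\rd\mathcal{M}$: if $h^{\rd\mathcal{M}}(\nu,\nu)$ vanishes along some components of $\rd\si$, the unique-continuation step must either be revisited component by component or bypassed by approximating the domain from within by strictly convex ones.
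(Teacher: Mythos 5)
First, a point of comparison: the paper does not prove this statement at all --- it is quoted from Ambrozio--Carlotto--Sharp \cite{A.C.S3} and Sargent \cite{S} --- and your plan is exactly the harmonic one-form method of those papers (test sections $\omega(e_i^{\top})\nu$, Bochner plus Gauss equation, dimension count against a maximal nonnegative subspace), so there is no divergence of strategy to report. There is, however, a genuine error in the identity you announce as ``the main calculation''. Writing $B_{\si}$ for the second fundamental form of $\si$ and $T$ for the unit tangent of $\rd\si$, one has pointwise $\sum_i|\nabla u_{\omega,i}|^2=|\nabla\omega|^2+|B_{\si}(\omega^{\sharp},\cdot)|^2$ and, since $B_{\si}$ is traceless, $|B_{\si}(\omega^{\sharp},\cdot)|^2=\tfrac12|B_{\si}|^2|\omega|^2$; combining with the Weitzenb\"ock formula and $K_{\si}=-\tfrac12|B_{\si}|^2$, the interior terms cancel only up to the boundary term of the Bochner integration by parts, and that term is \emph{not} killed by $\iota_{\eta}\omega=0$. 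Using $d\omega=0$ and the free boundary condition (so that $\eta$ is the outward normal of $\rd\calM$ along $\rd\si$), one finds $\langle\nabla_{\eta}\omega,\omega\rangle=-II_{\rd\calM}(T,T)\,|\omega|^2$ along $\rd\si$, and therefore the correct identity is
\begin{equation*}
\sum_{i=1}^{3}Q\bigl(u_{\omega,i}\nu,u_{\omega,i}\nu\bigr)
=-\int_{\rd\si}\Bigl(II_{\rd\calM}(T,T)+II_{\rd\calM}(\nu,\nu)\Bigr)|\omega|^{2}\,ds,
\end{equation*}
i.e.\ the full trace of $II_{\rd\calM}$ on $T\rd\calM$ appears, not only its $\nu\nu$-component; this surviving term is exactly the content of the boundary lemma of \cite{A.C.S3} that the present paper itself invokes in Section \ref{sec5}. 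Convexity still gives the right sign, so your counting argument is salvageable, but the identity you set out to prove is false and the claim that the tangential condition removes the spurious boundary contributions would not survive the attempt.

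The second genuine gap is the degenerate (merely weakly convex) case. Under strict convexity your endgame is fine: $|\omega|\equiv 0$ on $\rd\si$, and together with $\omega(\eta)=0$ the holomorphic form $\omega+i\ast\omega$ vanishes along $\rd\si$, hence identically. But the proposed remedy of ``approximating the domain from within by strictly convex ones'' cannot work as stated: the free boundary minimal surface is tied to the given domain and is not a free boundary minimal surface of any inner approximation, so there is no surface to which the perturbed identity could be applied. If $II_{\rd\calM}(T,T)+II_{\rd\calM}(\nu,\nu)$ vanishes along all of $\rd\si$ the argument yields no pointwise vanishing of $\omega$, and one must instead analyze the equality case directly (each $u_{\omega,i}\nu$ is then a Jacobi field for the Robin boundary condition, from which one extracts the needed rigidity), as is done in the cited papers; as written, your proof only establishes the theorem under strict convexity of $\rd\calM$ along $\rd\si$.
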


This last theorem together with Theorem \ref{t4} shows that in convex domains of $\mathbb{R}^3$ the area index of free-boundary minimal surfaces is controlled above and below only by the topology of the surface. For other results about estimates for the area index, see \cite{F1,C.F} on the case of disks, and \cite{S.Z,T,D,S.S.T.Z} for the case of annuli (in particular, the critical catenoid).

Another motivation for index estimates comes from some recent works, where is showed that considering minimal hypersurfaces with bounded index we obtain compactness and finiteness results, see \cite{Sh,A.C.S1,H.Z,C.K.M,B.S,C} for the case of closed minimal hypersurfaces in closed manifolds, and \cite{A.C.S4,G.Z} for the case of free-boundary minimal hypersurfaces in manifolds with boundary. Combining the theorem B with the main results in \cite{A.C.S4,G.Z}, we obtain a compactness result in \ref{sec6.1}.\\

\noindent
{\bf Outline of the paper}: First, in section \ref{sec2} we fix some notations and recall a few facts about free-boundary minimal immersions and Riemann surfaces. In section \ref{sec3} we compare the second variations formulas of the area and the energy and use it to prove Theorem \ref{t1}. The section \ref{sec4} is devoted to prove the free-boundary Sobolev inequality. In section \ref{sec5} we prove Theorems \ref{t2} and \ref{t3}. We discuss some applications of the main results, including the Theorem \ref{t4}, in section \ref{sec6}. The Riemann-Roch theorem for Riemann surfaces with boundary is described in the appendix \ref{app}.
\\\\
{\bf Ackowledgements}: I wish to express my gratitude to Lucas Ambrozio for discussions, and for suggestions on the manuscript. I also thank Ivaldo Nunes and Harold Rosenberg for their interest on this work. I am grateful to Vladimir Medvedev for pointing out a correction on a earlier version of the manuscript, concerning the application of the Riemann-Roch Theorem. Finally, I thank the anonymous referee by suggestions and corrections.

\section{Preliminary}\label{sec2}

A good reference for the machinery of complex manifolds used in this section, is the section $3$ in the chapter $I$ of \cite{W}.

Denote by $\si^{\ell}$ a compact Riemannian manifold with boundary $\rd\si$, where $\ell \geq 2$.
Consider a complete oriented Riemannian manifold $\mathcal{M}^n$, and $\mathcal{L}^k$ an embedded submanifold of $\mathcal{M}$, where $n > \ell$ and $2\leq k \leq n-1$. We denote the metric and the Levi-Civita connection of $\mathcal{M}$ by $\ip{\cdot}{\cdot}$ and $\nabla^{\calM}$ respectively. The second fundamental form of $\mathcal{L}$ is defined by 
$$\mathrm{II}_{\mathcal{L}}(X,Y) = (\nabla_{X}^{\calM}Y)^{\perp},$$
where $\perp$ denotes the projection on the normal bundle of $\mathcal{L}$.

Let $u: (\si,\rd\si) \to (\mathcal{M},\mathcal{L})$ be a free-boundary minimal immersion. We have the decomposition $E := u^{*}(T\mathcal{M}) = u_{*}(T\si)\oplus N\si$, where $N\si$ is the normal bundle of $u$. The following connections are induced by $\nabla^{\calM}$
\begin{eqnarray*}
\nabla &:& \Gamma(E) \to \Gamma(E\otimes T^{*}\si),\\
D &:& \Gamma(T\si) \to \Gamma(T\si\otimes T^{*}\si),\\
\nabla^{\perp} &:& \Gamma(N\si) \to \Gamma(N\si\otimes T^{*}\si).
\end{eqnarray*}

Consider the area (volume) functional
\begin{equation}
\ar(u) = \int_{\si}\jac(u)\,dA,
\end{equation}
and the energy functional
\begin{equation}
\e(u) = \int_{\si}e(u)\, dA,
\end{equation}
where $dA$ is the volume form of the metric on $\si$ induced by $u$, $\jac(u) = \sqrt{\det\bigl(u_{*}^{t}\circ u_{*}\bigr)}$, and $e(u) = \displaystyle\frac{1}{2}\sum_{l=1}^{n}|u_{*}e_l|^2$, with $\{e_1,\ldots,e_n\}$ a (locally defined) orthonormal frame field on $\si$. %In the following we denote $$

If $\Sigma$ has dimension $2$, we have $\jac(u) \leq e(u)$. Therefore
\begin{equation}\label{aeine}
\ar(u) \leq \e(u),
\end{equation}
and the equality holds if, and only if, the map $u$ is conformal, i.e., $$\ip{u_{*}X}{u_{*}Y} = \phi\,\ip{X}{Y},$$ 
for some positive function $\phi \in C^{\infty}(\si)$, and $\forall \ X, Y \in \Gamma(T\si)$.

\begin{defin}
We say that $V \in \Gamma(E)$, $\xi \in \Gamma\bigl(N\si\bigr)$ are {\it admissible variations} if $V,\xi \in T_{u(p)}\mathcal{L}$, for all $p \in \rd\si$. 
\end{defin}

{\it The second variation formulas} of the area and the energy of a conformal harmonic map along admissible variations are given respectively by %(see \cite{I})
\begin{align*}
(\de^2 \ar)(\xi) &= \int_{\si} \Bigl(
|\nabla^{\perp}\xi|^2 - |(\nabla \xi)^{\top}|^2 - 
\ip{\mathcal{R}(\xi)}{\xi}\Bigr)dA + \int_{\rd \si} \ip{\mathrm{II}_{\mathcal{L}}(\xi,\xi)}{\eta}\, dL, \\
(\de^2 \e)(V) &= \int_{\si} \Bigl(|\nabla V|^2 - \ip{\mathcal{R}(V)}{V}\Bigr)dA + \int_{\rd \si} \ip{\mathrm{II}_{\mathcal{L}}(V,V)}{\eta}\, dL, 
\end{align*}
where $\eta$ is the outward pointing unit normal of $\rd\si$, and $\mathcal{R}(V)$ is given by
\begin{equation}\label{curv.oper}
\mathcal{R}(V) := \sum_{l=1}^{n} R\bigl(V,u_{*}e_l\bigr)u_{*}e_l,
\end{equation}
where $\{e_1,\cdots,e_n\}$ is a (locally defined) orthonormal frame field on $\si$ and $R$ is the curvature tensor of $\mathcal{M}$.

\noindent
\begin{defin}
The Morse index of $\e(u)$, denoted by $\ind_{\e}(u)$, is the maximal dimension of a subspace of $\Gamma(E)$ consisting of admissible variations, on which the second variation of $\e$ is negative. The nullity of $\e(u)$, denoted by $\n_{\e}(u)$, is the maximal dimension of a subspace of $\Gamma(E)$ consisting of admissible variations, on which the second variation of $\mathcal{E}$ is zero. Analogously we define the index $\ind_{\ar}(u)$ and the nullity $\n_{\ar}(u)$ of $\ar(u)$.
\end{defin}

Suppose $\si$ is a Riemann surface with a complex structure $J: T\si \to T\si$. Consider $T_{\C}\si = T\si\otimes_{\re}\C$ and $E_{\C} = u^{*}(T_{\C}\mathcal{M})$, where $T_{\C}\mathcal{M} = T\mathcal{M}\otimes_{\re}\C$. Extend $\ip{\cdot}{\cdot}$ complex bilinearly to $E_{\C}$, and $J$ complex linearly to $T_{\C}\si$. We have the decompositions
\begin{equation}\label{spl}
T_{\C}\si = T^{1,0}\si\oplus T^{0,1}\si \ \ \textrm{and} \ \ 
T_{\C}^{*}\si = \Lambda^{1,0}\si\oplus\Lambda^{0,1}\si,
\end{equation}

\noindent
which are orthogonal with respect to $\ip{\cdot}{\cdot}$, where
$$T^{1,0}_{p}\si = \{v \in (T_{\C}\si)_p/ J(v) = iv\},$$
and
$$T^{0,1}_{p}\si = \{v \in (T_{\C}\si)_p/ J(v) = -iv\},$$
and $\Lambda^{1,0}\si$ and $\Lambda^{0,1}\si$ are the $\C$-dual bundles of $T^{1,0}\si$ and $T^{0,1}\si$ respectively.

These constructions allow us to define the operators
$$\rd: \Gamma(\mathcal{T}) \to \Gamma(\mathcal{T}\otimes\Lambda^{1,0}\si) \ \ \textrm{and} \ \ \bar{\rd}: \Gamma(\mathcal{T}) \to \Gamma(\mathcal{T}\otimes\Lambda^{0,1}\si),$$
on every holomorphic bundle $\Pi: \mathcal{T} \to \si$.

On extending $\nabla$ complex linearly to $\Gamma(E_{\C})$, and using the splitting \eqref{spl}, we obtain
\begin{eqnarray*}
\nabla^{1,0} : \Gamma(E_{\C}) \to \Gamma(E_{\C}\otimes \Lambda^{1,0}\si),\quad D^{1,0} : \Gamma(T_{\C}\si) \to \Gamma(T_{\C}\si\otimes \Lambda^{1,0}\si).
\end{eqnarray*}
In a similar way, we obtain $\nabla^{0,1}$ and $D^{0,1}$.

\section{The comparison between the indices}\label{sec3}

Let $\si$ be a compact surface with boundary. Using equation \eqref{aeine} is easy to prove that a variation which decreases the energy of a conformal harmonic map $u: (\si,\rd\si) \to (\mathcal{M}^n,\mathcal{L}^k)$ must also decrease its area, so 
\begin{equation}
\ind_{\e}(u) \leq \ind_{\ar}(u).
\end{equation} 
In \cite{E.M}, the authors find a condition which guarantees that a variation which decreases the area of a conformal harmonic map will also decrease the energy, in the case of immersions of closed surfaces. The idea is reparametrize the variation so as to maintain it conformal with respect to the initial conformal structure. Supposing there is a map from $\Gamma(N\si)$ to $\Gamma\bigl(u_{*}(T\si)\bigr)$ such that
\begin{enumerate}
\item the map $\xi \to X_{\xi}$ is linear;
\item the family of maps corresponding to the variation vector field $\xi + X_{\xi}$ is a family of conformal maps;
\end{enumerate}
which gives $(\de^2 \e)(\xi + X_{\xi}) = (\de^2 \ar)(\xi)$, they proved that the following holds
\begin{equation}\label{maineq}
D^{1,0}X^{0,1}_{\xi} = - (\nabla^{1,0}\xi)^{\top}.
\end{equation}

\noindent
They also obtained a converse of this, comparing the numbers $(\de^2 \e)(\xi + X)$ and $(\de^2 \ar)(\xi)$.

In our case, we have to impose the additional condition that $\xi$ and $\xi + X_{\xi}$ are admissible variations. Also, the boundary terms coming for the second variation formulas will affect the calculations. The idea is then to consider a variation that with an appropriate boundary condition along $\rd\si$. Since the calculations in \cite{E.M} necessary to derive equation \eqref{maineq} are local, it also holds on the case with boundary, under the additional conditions. The connverse is proved in the next result, which is an analogous of theorem $2.1$ in \cite{E.M} in the context of free boundary minimal surfaces. For related results on the case the ambient space is the Euclidean unit ball, see section $6$ of \cite{F.S2}.

\begin{thm}\label{indcomp}
Let $u: (\si^2,\rd\si^2) \to (\mathcal{M}^n,\mathcal{L}^k)$ be a free-boundary minimal immersion, where $\si$ is a compact Riemann surface with boundary, and $(\mathcal{M}^n,\mathcal{L}^k)$ are as in section \ref{sec2}. Let $X \in \Gamma\bigl(u_{*}(T\si)\bigr)$ and $\xi \in \Gamma(N\si)$ be admissible variations. Then
$$(\de^2 \ar)(\xi) \leq (\de^2 \e)(X + \xi),$$
and the equality holds if, and only if,
$$D^{1,0}X^{0,1} = - (\nabla^{1,0}\xi)^{\top}.$$
\end{thm}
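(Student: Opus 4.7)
The plan is to compute the difference $(\de^2\e)(X+\xi)-(\de^2\ar)(\xi)$ directly and recognize it as an integral of a pointwise non-negative quantity, up to boundary contributions that vanish under the hypothesis $X|_{\rd\si}\equiv 0$. I would begin by expanding $(\de^2\e)(X+\xi)$ by bilinearity into the pieces $(\de^2\e)(\xi)$, $(\de^2\e)(X)$, and the cross term $2(\de^2\e)(X,\xi)$. For the pure normal piece, the pointwise orthogonal decomposition $|\nabla\xi|^2=|\nabla^{\perp}\xi|^2+|(\nabla\xi)^{\top}|^2$ combined with a direct comparison of \eqref{iaform} and \eqref{ieform} gives
$$(\de^2\e)(\xi)-(\de^2\ar)(\xi)=2\int_{\si}|(\nabla\xi)^{\top}|^2\,dA,$$
with no residual boundary integral since the terms $\int_{\rd\si}\ip{\nabla_\xi\xi}{\eta}\,dL$ appearing in both formulas are identical.

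The substantive calculation is to handle the pieces that involve $X$. Following the local manipulations of \cite{E.M}, I would pass to the complex splitting $T_{\C}\si = T^{1,0}\si\oplus T^{0,1}\si$ and invoke the Cauchy-Riemann-type relations valid for the conformal harmonic map $u$ (most notably $\nabla^{0,1}u_{*}^{1,0}=0$, which is harmonicity in complex form, and the conformality identity $\ip{u_{*}^{1,0}}{u_{*}^{1,0}}=0$). These should enable an integration by parts that rewrites
$$2(\de^2\e)(X,\xi)+(\de^2\e)(X) \;=\; 2\int_{\si}\bigl|D^{1,0}X^{0,1}+(\nabla^{1,0}\xi)^{\top}\bigr|^2\,dA \;-\; 2\int_{\si}|(\nabla\xi)^{\top}|^2\,dA \;+\; B(X,\xi),$$
where $B(X,\xi)$ collects the boundary residues produced by each integration by parts. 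Adding the identity from the previous paragraph cancels the negative middle term and produces
$$(\de^2\e)(X+\xi)-(\de^2\ar)(\xi) \;=\; 2\int_{\si}\bigl|D^{1,0}X^{0,1}+(\nabla^{1,0}\xi)^{\top}\bigr|^2\,dA \;+\; B(X,\xi),$$
from which both the inequality and the equality case follow once $B(X,\xi)=0$ is established.

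The main obstacle is precisely this last verification. In the closed case treated in \cite{E.M} no boundary terms ever appear, whereas here each integration by parts generates a contribution over $\rd\si$. The organizing principle I would use is to arrange the computation so that $X$, rather than a derivative of $X$, always plays the role of the integrated factor in each integration by parts; then every residue takes the form $\int_{\rd\si}\ip{X}{\cdots}\,dL$ and is killed by $X|_{\rd\si}\equiv 0$. A related subtlety concerns the boundary integrand of \eqref{ieform} itself: for $V=X+\xi$, the expression $\ip{\nabla_V V}{\eta}$ on $\rd\si$ splits into $\ip{\nabla_\xi\xi}{\eta}$ plus contributions from $\nabla_X X$, $\nabla_X\xi$, and $\nabla_\xi X$; the first two vanish on $\rd\si$ automatically because $X$ does, while the third must be shown to cancel against the interior residues, completing the verification that $B(X,\xi)=0$.
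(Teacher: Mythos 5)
Your plan is essentially the paper's proof: compute $(\de^2\e)(X+\xi)$ in complex form, use harmonicity and conformality of $u$ to integrate by parts exactly as in Ejiri--Micallef, and kill every boundary residue using $X|_{\rd\si}\equiv 0$ (together with the pointwise orthogonality $\ip{X^{1,0}}{\xi}\equiv 0$), arriving at $(\de^2\e)(X+\xi)-(\de^2\ar)(\xi)=c\int_{\si}\bigl|D^{1,0}X^{0,1}+(\nabla^{1,0}\xi)^{\top}\bigr|^2\,dA$ with $c>0$, which gives both the inequality and the equality case. The only cosmetic differences are that the paper works directly with the orthogonal decomposition of $\nabla_{\rd_z}(X+\xi)$ rather than a bilinear expansion into pure and cross terms, and the boundary contribution from $\nabla_{\xi}X$ is shown to vanish outright via $\ip{\nabla_{\xi}X}{\eta}=\xi\ip{X}{\eta}-\ip{X}{\nabla_{\xi}\eta}$ rather than cancelling against interior residues.
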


\begin{proof}
By abuse of notation we denote the image $u(\si)$ by $\si$. Let $x,y$ be local isothermal coordinates which cover $\si$ and $\rd \si$ up to sets of measure zero, and let $z = x + iy$ be the corresponding local complex coordinate. We can suppose that for any $p \in \rd\si$ we have $\rd_x \in T_{p}(\rd\si)$ and $\rd_y \perp T_{p}(\rd\si)$. We define
\begin{align*}
\rd_{z} &= \frac{1}{2}\bigl(\rd_x - i\rd_y\bigr), \quad \rd_{\bar z} = \frac{1}{2}\bigl(\rd_x + i\rd_y\bigr),\quad
dz = dx + idy, \quad d\bar{z} = dx - idy,
\end{align*}
and $u_z = u_{*}(\rd_{z})$, $u_{\bar z} = u_{*}(\rd_{\bar z})$. Locally, the fibre of $u_{*}\big(T^{1,0}\si\big)$ is spanned by $u_{z}$, and the fibre of $u_{*}\big(T^{0,1}\si\big)$ is spanned by $u_{\bar z}$. 

There is $\lambda > 0$ such that the metric on $\si$ is given by
$$\lambda^2\big(dx^2 + dy^2\big) = \lambda^2dzd\bar{z},$$
and on these coordinates $D^{1,0} = (D_{\rd_z})\otimes dz$, $D^{0,1} = (D_{\rd_{\bar{z}}})\otimes d\bar{z}$, $\nabla^{1,0} = (\nabla_{\rd_z})\otimes dz$ and $\nabla^{0,1} = (\nabla_{\rd_{\bar{z}}})\otimes d\bar{z}$. Moreover, we can parametrize $\rd\si$ as $x(s) = s,\, y(s) = 0$, $s\in I$.\\

Let $V = X + \xi$. Then $(\de^2 \e)(V)$ is given by

\begin{equation}\label{3.2}
\begin{split}
  & \int_{\Sigma} \Bigl(|\nabla_{\rd_x}V|^2 +
  |\nabla_{\rd_y}V|^2 - \ip{R(V,u_x)u_x}{V} - \ip{R(V,u_y)u_y}{V} 
  \Bigr)dxdy\\
  & + \int_{I} \ip{\mathrm{II}_{\mathcal{L}}(V,V)}{\eta} \lambda\,ds.
  \end{split}
\end{equation}

We first deal with the integrand on the first integral in \eqref{3.2}. Since $V$ is a real section 
\begin{equation*}\label{3.3} 
|\nabla_{\rd_x}V|^2 + 
  |\nabla_{\rd_y}V|^2 = 4\,|\nabla_{\rd_z}V|^2. 
\end{equation*}

\noindent 
We have
\begin{equation}\label{3.4} 
 \nabla_{\rd_z} V = (\nabla_{\rd_z} V)^{\perp} + \zeta + 
  (\nabla_{\rd_z} X^{1,0})^{\top} 
\end{equation} 
where %as suggested by \eqref{maineq}, 
\begin{equation}\label{3.5} 
  \zeta := (\nabla_{\rd_z}\xi)^{\top} + 
  D_{\rd_z}X^{0,1} \, . 
\end{equation}

Since $u$ is conformal we have
\begin{equation}\label{conf}
\ip{\nabla_{\rd_z}X^{1,0}}{u_z} = 0 \ \ \textrm{and} \ \
\ip{\nabla_{\rd_z}X^{0,1}}{u_{\bar z}} = 0.
\end{equation}
Also, since $u$ is harmonic we obtain
\begin{equation}\label{harm}
\ip{\nabla_{\rd_z}\xi}{u_{\bar z}} = - \ip{\xi}{\nabla_{\rd_z}u_{\bar z}} = 0.
\end{equation}

Using \eqref{conf} and \eqref{harm} in \eqref{3.4} we obtain 
\begin{equation}\label{3.6}
  |\nabla_{\rd_z} V|^2 = |(\nabla_{\rd_z} V)^{\perp}|^2 + |\zeta|^2 + 
  |(\nabla_{\rd_z} X^{1,0})^{\top}|^2. 
\end{equation}

\noindent
Locally, we can write
$X^{0,1} = \psi\,u_{\bar{z}}$ for some function $\psi$. So it follows that
\begin{equation}\label{3.7} 
  (\nabla_{\rd_z} X^{0,1})^{\perp} = 0. 
\end{equation} 
So, \eqref{3.7} allows us to re-write \eqref{3.6} as 
\begin{equation}\label{3.8} 
\begin{split} 
  |\nabla_{\rd_z} V|^2 &= |(\nabla_{\rd_z} \xi)^{\perp}|^2 + |\zeta|^2 + 
  |\nabla_{\rd_z} X^{1,0}|^2 \\ 
  & \quad + 
  \ip{(\nabla_{\rd_z} \xi)^{\perp}}{\nabla_{\rd_{\bar{z}}} X^{0,1}} 
  +
  \ip{(\nabla_{\rd_{\bar{z}}} \xi)^{\perp}}{\nabla_{\rd_z} X^{1,0}}. 
\end{split} 
\end{equation}

For the remaining terms of \eqref{3.8} we have 
\begin{equation*}\label{3.9} 
\begin{split} 
\ip{(\nabla_{\rd_z}\xi)^{\perp}}{\nabla_{\rd_{\bar{z}}}X^{0,1}} 
& = \rd_z \ip{\xi}{\nabla_{\rd_{\bar{z}}}X^{0,1}} - 
\ip{\xi}{\nabla_{\rd_z}\nabla_{\rd_{\bar{z}}} X^{0,1}} \\
\ip{(\nabla_{\rd_{\bar{z}}}\xi)^{\perp}}{\nabla_{\rd_{z}}X^{1,0}}  & = \rd_{\bar{z}} \ip{\xi}{\nabla_{\rd_z}X^{1,0}} - 
\ip{\xi}{\nabla_{\rd_{\bar{z}}}\nabla_{\rd_z} X^{1,0}}. 
\end{split} 
\end{equation*} 

\noindent
However, from \eqref{3.7} and \eqref{3.5} we have 
$\nabla_{\rd_z}X^{0,1} = (\nabla_{\rd_z}X^{0,1})^{\top} = 
\zeta - (\nabla_{\rd_z}\xi)^{\top}$, hence 
\begin{equation}\label{3.10} 
\begin{split} 
  \ip{\nabla_{\rd_z}\nabla_{\rd_{\bar{z}}}X^{0,1}}{\xi} 
  &= \ip{R(u_z,u_{\bar{z}})X^{0,1}}{\xi} + 
  \ip{\nabla_{\rd_{\bar{z}}}(\zeta - (\nabla_{\rd_z}\xi)^{\top})}{\xi} \\ 
  &= \ip{R(u_z,u_{\bar{z}})X^{0,1}}{\xi} + 
  |(\nabla_{\rd_z}\xi)^{\top}|^2 - 
  \ip{\zeta}{(\nabla_{\rd_{\bar{z}}}\xi)^{\top}}. 
\end{split} 
\end{equation}

\noindent 
In a similar way we can prove that 
\begin{equation}\label{3.11} 
\ip{\nabla_{\rd_{\bar{z}}}\nabla_{\rd_z}X^{1,0}}{\xi} = 
\ip{R(u_{\bar{z}},u_z)X^{1,0}}{\xi} + 
  |(\nabla_{\rd_{\bar{z}}}\xi)^\top|^2 - 
  \ip{\bar{\zeta}}{(\nabla_{\rd_z}\xi)^{\top}}. 
\end{equation}

Let $T$ be the unit tangent to $\rd\si$ in the chosen orientation. Then
$$T = \lambda^{-1}\,u_x,\quad \eta = - \lambda^{-1}\,u_y.$$
Locally $X^{1,0} = \phi\,u_{z}$ and $X^{0,1} = \overline{\phi}\,u_{z}$. Along $\rd\si$ we have $X|_{\rd\si} = f\,u_x$, where $f$ is real, so $f = \phi|_{\rd\si} = \overline{\phi}|_{\rd\si}$. Moreover $dx\wedge dy = \frac{i}{2}dz\wedge d\bar{z}$. Using this and Stokes' Theorem we obtain
\begin{equation}\label{neweqbdry}
\begin{split}
&\quad 4\int_{\si}\Big[\rd_z \ip{\xi}{\nabla_{\rd_{\bar{z}}}X^{0,1}} + \rd_{\bar{z}} \ip{\xi}{\nabla_{\rd_z}X^{1,0}}\Big]dxdy\\ 
&= -\, 2i\int_{\si}\Big[\rd_z \ip{\nabla_{\rd_{\bar{z}}}\xi}{X^{0,1}} + \rd_{\bar{z}} \ip{\nabla_{\rd_z}\xi}{X^{1,0}}\Big]dz\wedge d\bar{z}\\
&= -\, 2i\int_{\rd\si} \Big[\ip{\nabla_{\rd_{\bar{z}}}\xi}{X^{0,1}}d\bar{z} - \ip{\nabla_{\rd_{z}}\xi}{X^{1,0}}dz\Big]\\
&= 2\int_{\rd\si}\phi\ip{\nabla_{\rd_{x}}\xi}{u_{y}}\,dx\\
&= -2\int_{I}\ip{\nabla_{X}\xi}{\eta}\lambda\,ds.
\end{split}
\end{equation}

Substituting \eqref{3.9}, \eqref{3.10}, \eqref{3.11} and \eqref{neweqbdry} in \eqref{3.8} we obtain
\begin{equation}\label{3.12} 
\begin{split} 
  \int_{\si} |\nabla_{\rd_z}V|^2 \,dxdy & = \int_{\si} 
  \left(|(\nabla_{\rd_z}\xi)^{\perp}|^2 + |\zeta|^2 + 
  |\nabla_{\rd_z}X^{1,0}|^2 - 
  2\,|(\nabla_{\rd_z}\xi)^{\top}|^2 \right. \\ 
  & \quad - \ \ip{R(u_z,u_{\bar{z}})X^{0,1}}{\xi} - 
  \ip{R(u_{\bar{z}},u_z)X^{1,0}}{\xi} \\ 
  & \left. \quad + \ \ip{\zeta}{(\nabla_{\rd_{\bar{z}}}\xi)^{\top}} 
  + \ip{\bar{\zeta}}{(\nabla_{\rd_z}\xi)^{\top}}\right)dxdy \\
  & \quad -\, 2\int_{I}\ip{\nabla_{X}\xi}{\eta}\lambda\,ds. 
\end{split} 
\end{equation}

It only remains to see what happens with the last two terms in \eqref{3.2}: 
\begin{equation}\label{3.13} 
  \begin{split} 
  \ip{R(V,u_x)u_x}{V} + \ip{R(V,u_y)u_y}{V} 
  = 4\,\ip{R(V,u_z)u_{\bar{z}}}{V} \\ 
  = 4\Bigl(\ip{R(\xi,u_z)u_{\bar{z}}}{\xi} + 
  \ip{R(X^{0,1},u_z)u_{\bar{z}}}{X^{1,0}} \\ 
  \qquad + \ip{R(\xi,u_z)u_{\bar{z}}}{X^{1,0}} + 
  \ip{R(X^{0,1},u_z)u_{\bar{z}}}{\xi} \Bigr). 
  \end{split} 
\end{equation}

\noindent 
By the second Bianchi identity, for any $W \in \Gamma(E)$ we have
\begin{equation}\label{3.14} 
  \begin{split} 
\ip{R(X^{0,1},u_z)u_{\bar{z}}}{W} + 
\ip{R(u_z,u_{\bar{z}})X^{0,1}}{W} &= 0, \\ 
\ip{R(X^{1,0},u_{\bar{z}})u_z}{W} + 
\ip{R(u_{\bar{z}},u_z)X^{1,0}}{W} &= 0. 
  \end{split} 
\end{equation}

On the other hand, on $\rd \si$ we have
\begin{equation}\label{bdrterm}
 \ip{\mathrm{II}_{\mathcal{L}}(V,V)}{\eta} =  \ip{\nabla_X X + 2\nabla_X \xi + \mathrm{II}_{\mathcal{L}}(\xi,\xi)}{\eta}.
\end{equation}

Using \eqref{3.3}, \eqref{3.12}, \eqref{3.13}, \eqref{3.14} and \eqref{bdrterm} in 
\eqref{3.2} yields: 
\begin{equation}\label{3.15} 
  \begin{split} 
  (\de^2 \e)(V) &= 4 \int_{\si} \left( 
  |(\nabla_{\rd_z}\xi)^{\perp}|^2 + |\nabla_{\rd_z}X^{1,0}|^2 + 
  |\zeta|^2 - 2\,|(\nabla_{\rd_z}\xi)^{\top}|^2 \right. \\ 
  & \quad - \ip{R(\xi,u_z)u_{\bar{z}}}{\xi} - 
  \ip{R(X^{0,1},u_z)u_{\bar{z}}}{X^{1,0}} \\ 
  & \left. \quad + \ \ip{\zeta}{(\nabla_{\rd_{\bar{z}}}\xi)^{\top}} 
  + \ip{\bar{\zeta}}{(\nabla_{\rd_z}\xi)^{\top}} \right)dxdy \\
  & \quad + \int_{I} \ip{\nabla_X X + \mathrm{II}_{\mathcal{L}}(\xi,\xi)}{\eta}\lambda\,ds. 
  \end{split} 
\end{equation}

On the other hand,
\begin{equation}\label{3.16}
  \begin{split} 
  \int_{\si} |\nabla_{\rd_z}X^{1,0}|^2\,dxdy
  &= \int_{\si} \ip{\nabla_{\rd_z}X^{1,0}}{\nabla_{\rd_{\bar z}}X^{0,1}}\, dxdy \\
  &= \int_{\si} \Bigl(\rd_{\bar z}\ip{\nabla_{\rd_z}X^{1,0}}{X^{0,1}} - \ip{\nabla_{\rd_{\bar z}}\nabla_{\rd_z}X^{1,0}}{X^{0,1}}\Bigr) dxdy\\
  &= \int_{\si} \Bigl(\rd_{\bar z}\ip{\nabla_{\rd_z}X^{1,0}}{X^{0,1}} - \ip{\nabla_{\rd_z}\nabla_{\rd_{\bar z}}X^{1,0}}{X^{0,1}}\\
  & \quad + \ \ip{R(u_z,u_{\bar{z}})X^{1,0}}{X^{0,1}}\Bigr) dxdy\\
  &= \int_{\si} \Bigl(\rd_{\bar z}\ip{\nabla_{\rd_z}X^{1,0}}{X^{0,1}} - \rd_z\ip{\nabla_{\rd_{\bar z}}X^{1,0}}{X^{0,1}}\\
  & \quad + \ |\nabla_{\rd_{\bar z}}X^{1,0}|^2 + \ip{R(u_z,u_{\bar{z}})X^{1,0}}{X^{0,1}}\Bigr) dxdy.
  \end{split}  
 \end{equation}
 
Also, by Stokes' Theorem,
\begin{equation}\label{neweqbdr2}
\begin{split}
 &\quad\, 4\int_{\si} \Bigl(\rd_{\bar z}\ip{\nabla_{\rd_z}X^{1,0}}{X^{0,1}} - \rd_z\ip{\nabla_{\rd_{\bar z}}X^{1,0}}{X^{0,1}}\Big)dxdy\\
 &= 2i\int_{\si} \Bigl(\rd_{\bar z}\ip{\nabla_{\rd_z}X^{1,0}}{X^{0,1}} - \rd_z\ip{\nabla_{\rd_{\bar z}}X^{1,0}}{X^{0,1}}\Big)dz\wedge d\bar{z}\\
 &= -2i\int_{\rd\si} \Bigl(\ip{\nabla_{\rd_z}X^{1,0}}{X^{0,1}}dz + \ip{\nabla_{\rd_{\bar z}}X^{1,0}}{X^{0,1}}d\bar{z}\Big)\\
 &= -2i\int_{\rd\si} \ip{\nabla_{\rd_x}X^{1,0}}{X^{0,1}}\,dx\\
 &= -\frac{i}{2}\int_{\rd\si} \Big[\ip{\nabla_{\rd_x}(\phi\,u_x)}{\phi\,u_x} + \ip{\nabla_{\rd_x}(\phi\,u_y)}{\phi\,u_y}\Big]\,dx\\
 &\quad + \frac{1}{2}\int_{\rd\si} \Big[\ip{\nabla_{\rd_x}(\phi\,u_x)}{\phi\,u_y} - \ip{\nabla_{\rd_x}(\phi\,u_y)}{\phi\,u_x}\Big]\,dx\\
 &= -\frac{i}{2}\int_{\rd\si} \rd_x(\lambda^2)\,dx + \int_{\rd\si} \ip{\nabla_{(\phi\,\rd_x)}(\phi\,u_x)}{u_y}\,dx\\
 &= - \int_{I} \ip{\nabla_{X}X}{\eta}\lambda\,ds.
  \end{split}  
 \end{equation}

Using the equation \eqref{3.5} and the orthogonal decomposition \eqref{spl} a simple calculation shows that 
\begin{equation}\label{3.17} 
  |\nabla_{\rd_{\bar z}}X^{1,0}|^2
  = |\zeta|^2 + |(\nabla_{\rd_z}\xi)^{\top}|^2 - 
  \ip{\zeta}{(\nabla_{\rd_{\bar{z}}}\xi)^{\top}}  -  \ip{\bar{\zeta}}{(\nabla_{\rd_z}\xi)^{\top}}.
\end{equation} 

\noindent
Substituting \eqref{3.16}, \eqref{neweqbdr2} and \eqref{3.17} in \eqref{3.15}, and using \eqref{3.14} we obtain 
\begin{equation*}\label{comp} 
\begin{split} 
(\de^2 \e)(V) &= 4 \int_{\si} \left(
|\nabla^{\perp}_{\rd_z}\xi|^2 - |(\nabla_{\rd_z}\xi)^{\top}|^2 - 
\ip{R(\xi,u_z)u_{\bar{z}}}{\xi} + 2|\zeta|^2 \right)dxdy \\
& \quad + \int_{a}^{b} \ip{\mathrm{II}_{\mathcal{L}}(\xi,\xi)}{\eta}\lambda\,ds\\
&= \int_{\si} \left(
|\nabla^{\perp}\xi|^2 - |(\nabla \xi)^{\top}|^2 - 
\ip{\mathcal{R}(\xi)}{\xi}\right)dA \\
& \quad + \int_{\rd \si} \ip{\mathrm{II}_{\mathcal{L}}(\xi,\xi)}{\eta}\, dL + 8\int_{\si} |\zeta|^2\, dxdy \\
&= (\de^2 \ar)(\xi) + 8 \int_{\si} \bigl|D^{1,0}X^{0,1} + (\nabla^{1,0}\xi)^{\top}\bigr|^2 \,dA. 
\end{split} 
\end{equation*} 
Thus, the result follows.
\end{proof}

Now we can prove the Theorem \ref{t1}.\\

\begin{dem1}
The first inequality was already discussed. Let us thus discuss the second one.

Let $F$ be the subbundle of $T^{0,1}\si|_{\rd\si}$ whose sections in local isothermal coordinates are given by $\psi\,\partial_{\bar{z}}$, where $\psi$ is a real function. Consider a maximal subspace $S$ on which $\de^2 \ar < 0$, and $\xi \in S$. 

By the Fredholm alternative, the boundary-value problem 
\begin{equation}\label{obst}
\left\{
\begin{array}{rl}
&D^{1,0}X^{0,1} =\ - (\nabla^{1,0}\xi)^{\top}, \quad \textrm{on} \ \si\\\\
&X^{0,1}(p) \in F, \quad \textrm{if}\ p\in \rd\si
\end{array} \right.
\end{equation} 
has a solution if, and only if, $(\nabla^{1,0}\xi)^{\top}$ is orthogonal to $\ker (D^{1,0}){^*}$, where $(D^{1,0}){^*}$ is the adjoint of 
$$D^{1,0}: \Gamma_{F}(T^{0,1}\si) \to \Gamma(T^{0,1}\si\otimes\Lambda^{1,0}\si).$$ 

An calculation (we can proceed as remark C.1.3 on page $580$ of \cite{Mc.Sa}) shows that 
$$(D^{1,0}){^*} = -*\bar{\rd}* = i*\bar{\rd} \colon \Gamma_{F}(T^{0,1}\si \otimes \Lambda^{1,0}\si) \to \Gamma(T^{0,1}\si),$$ 
where $*$ is the Hodge star operator, and the second equality uses the fact that $*\omega = J\omega = -i\omega$, for $\omega \in \Gamma(T^{0,1}\si \otimes \Lambda^{1,0}\si)$.
Therefore 
$$\ker (D^{1,0}){^*} = H_{F}^{0}(T^{0,1}\si \otimes \Lambda^{1,0}\si),$$ 
where on the right hand side we have the space of holomorphic sections $Y$ of $T^{0,1}\si \otimes \Lambda^{1,0}\si$ such that $Y(p) \in F,\, \forall\, p \in \rd\si$. 

Denote
\begin{align*}
h^0(\Lambda^{0,1}\si) &= \ \textrm{complex dimension of} \ H_{F}^0(\Lambda^{0,1}\si),\\
h^0(T^{0,1}\si \otimes \Lambda^{1,0}\si) &= \ \textrm{complex dimension of} \ H_{F}^0(T^{0,1}\si \otimes \Lambda^{1,0}\si).
\end{align*}

\noindent
Then, we may find a subspace
$\tilde{S} \subset S$ of real dimension 
$$n \geq \dim S - 2h^0(T^{0,1}\si \otimes \Lambda^{1,0}\si)$$
for which \eqref{obst} has a solution $X_{\xi}^{0,1}$ whenever $\xi \in \tilde{S}$. Define $X_{\xi}^{1,0} = \overline{X_{\xi}^{0,1}}$ and $X_{\xi} = X_{\xi}^{1,0} + X_{\xi}^{0,1}$. Since the equation \eqref{obst} is linear, we may choose the map $\xi \to X_{\xi}$ to be linear. 
Let $\hat{S} = \{\xi + X_{\xi}/\, \xi \in \tilde{S} \} 
\subset \Gamma(E)$. Then, by Theorem \ref{indcomp}, we have
$\de^2 \e \vert_{\hat{S}} < 0$ and thus, 
$$\ind_{\e}(u) \geq \dim \hat{S} = \dim \tilde{S} \geq \ind_{\ar}(u) - 2h^0(T^{0,1}\si \otimes \Lambda^{1,0}\si).$$

Now, by the Riemann-Roch theorem for Riemann surfaces with boundary applied to the operator (see the example on the appendix \ref{ex})
$$\bar{\rd}: W_{F}^{m,q}(\Lambda^{0,1}\si) \to W^{m-1,q}(\Lambda^{0,1}\si\otimes\Lambda^{0,1}\si)$$
we have
$$2h^{0}(\Lambda^{0,1}\si) - 2h^0(T^{0,1}\si\otimes \Lambda^{1,0}\si) = 3\chi(\si).$$

By \eqref{disk}, if $\si$ is a disk, then $\bar{\rd}$ is surjective and $h^0(T^{0,1}\si \otimes \Lambda^{1,0}\si) = 0$. On the other hand, by \eqref{hyperb}, if $\chi(\si) < 0$, then $h^{0}(\Lambda^{0,1}\si) = 0$, so
$$2h^0(T^{0,1}\si \otimes \Lambda^{1,0}\si) = -3\chi(\si) = 6g - 6 + 3m.$$

Finally, if $\si$ is an annulus, then
\begin{equation}\label{genzero}
2h^{0}(\Lambda^{0,1}\si) = 2h^0(T^{0,1}\si\otimes \Lambda^{1,0}\si).
\end{equation}  
If $h^0(T^{0,1}\si\otimes \Lambda^{1,0}\si) = 0$, there is nothing to do. If $h^0(T^{0,1}\si\otimes \Lambda^{1,0}\si) > 0$, consider the double $\tilde{\si}$ of $\si$ endowed with a symmetric complex structure (as explained on the appendix \ref{ex}), so that $\tilde{\si}$ has genus $1$. Let $[d]$ be the divisor associated to the bundle $\Lambda^{0,1}\tilde{\si}$. Following the notation of \cite{F.K}, the dimension of this divisor is $r\bigl([d]\bigr) = 2h^{0}(\Lambda^{0,1}\si) > 0$, hence the divisor $[d]$ is special. Thus, by Clifford's theorem (see theorem III$.8.4$ of \cite{F.K}) and equation \eqref{genzero}
\begin{equation*}
2h^0(T^{0,1}\si\otimes \Lambda^{1,0}\si) = r\bigl([d]\bigr) \leq \frac{\deg\bigl([d]\bigr)}{2} + 1 = 1.
\end{equation*}
\end{dem1}

Define
\begin{align*}
\n_{\e}^T(u) &= \textrm{dimension of the space of purely tangential Jacobi fields of $u$,}\\ 
& \ \ \ \ \textrm{as a critical point of} \ \e.
\end{align*}
\noindent
A minor modification of the proof of Theorem \ref{t1} yields.\\

\begin{thm}\label{nulcomp} 
Let $u: (\si,\rd\si) \to (\mathcal{M},\mathcal{L})$ and $\upsilon(g,m)$ as in Theorem \ref{t1}. Then
\begin{equation}
\begin{split}
\ind_{\e}(u) + \n_{\e}(u) - \n_{\e}^T(u) &\leq \ind_{\ar}(u) + \n_{\ar}(u) \\
\leq \ind_{\e}(u) &+ \n_{\e}(u) - \n_{\e}^T(u) + \upsilon(g,m),
\end{split}
\end{equation}
which combined with \eqref{indine} gives us,
\begin{equation}
\n_{\e}(u) - \n_{\e}^T(u) - \upsilon(g,m) \leq \n_{\ar}(u) \leq \n_{\e}(u) - \n_{\e}^T(u) + \upsilon(g,m). 
\end{equation}
 \end{thm}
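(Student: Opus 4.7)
The strategy is to follow the argument of Theorem \ref{mt1} while replacing subspaces of strict negativity by subspaces of nonpositivity (whose dimensions compute index plus nullity), and carefully tracking the purely tangential Jacobi fields. The key preliminary observation is a general inequality valid for every admissible variation $V = X + \xi$, without any boundary constraint on $X$:
\begin{equation*}
(\de^2 \ar)(\xi) \leq (\de^2 \e)(V).
\end{equation*}
This follows by second-order Taylor expansion of $\ar(u_t) \leq \e(u_t)$ (valid because $\si$ is two-dimensional) around $t=0$, using $\ar(u) = \e(u)$ at the conformal harmonic map together with the vanishing of the respective first variations. Note that no boundary condition on $X$ is needed, in contrast with Theorem \ref{indcomp}.

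For the upper bound $\ind_{\ar}(u) + \n_{\ar}(u) \leq \ind_{\e}(u) + \n_{\e}(u) - \n_{\e}^T(u) + \upsilon(g,m)$, I would run the construction of Theorem \ref{mt1} with $\de^2\ar \leq 0$ in place of $\de^2\ar < 0$. Starting from a maximal $S \subset \Gamma(N\si)$ with $\de^2\ar|_S \leq 0$ and $\dim S = \ind_{\ar}(u) + \n_{\ar}(u)$, solving \eqref{obst} via the Fredholm alternative on a subspace $\tilde S \subset S$ of codimension at most $2h^0(T^{0,1}\si \otimes \Lambda^{1,0}\si) \leq \upsilon(g,m)$ yields a linear assignment $\xi \mapsto X_\xi$, and $\hat S := \{\xi + X_\xi : \xi \in \tilde S\}$ satisfies $\de^2\e|_{\hat S} = \de^2\ar|_{\tilde S} \leq 0$ by Theorem \ref{indcomp}. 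The new step is to enlarge to $\hat S \oplus T^{\mathrm{Jac}}$, where $T^{\mathrm{Jac}}$ denotes the space of purely tangential Jacobi fields of $\e$: the sum is direct (elements of $\hat S$ have nonzero normal part whenever $\xi \neq 0$), and $\de^2\e$ remains nonpositive on the sum because Jacobi fields lie in the kernel of the polarization of $\de^2\e$, annihilating all cross terms. Thus $\ind_{\e}(u) + \n_{\e}(u) \geq \dim \hat S + \n_{\e}^T(u)$, which rearranges to the stated upper bound.

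For the lower bound $\ind_{\e}(u) + \n_{\e}(u) - \n_{\e}^T(u) \leq \ind_{\ar}(u) + \n_{\ar}(u)$, I would make the specific choice $W = W_- \oplus \mathcal{J}_{\e}$ of maximal nonpositive subspace, where $W_-$ is a maximal subspace on which $\de^2\e < 0$ strictly and $\mathcal{J}_{\e}$ is the full space of Jacobi fields of $\e$. Then $\dim W = \ind_{\e}(u) + \n_{\e}(u)$ and $\de^2\e|_W \leq 0$, again by the bilinear-form-kernel property of Jacobi fields. For the normal projection $\pi: W \to \Gamma(N\si)$, the general inequality gives $\de^2\ar \leq 0$ on $\pi(W)$, whence $\dim \pi(W) \leq \ind_{\ar}(u) + \n_{\ar}(u)$. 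For any $V = V_- + V_J \in \ker\pi|_W$, pure tangentiality of $V$ together with the general inequality (applied with $\xi = 0$) yields $\de^2\e(V_-) = \de^2\e(V) \geq 0$; since $\de^2\e$ is strictly negative on $W_- \setminus \{0\}$, this forces $V_- = 0$, so $V = V_J \in T^{\mathrm{Jac}}$ and therefore $\dim\ker\pi|_W \leq \n_{\e}^T(u)$. Combining yields $\dim W \leq \dim \pi(W) + \dim\ker\pi|_W \leq \ind_{\ar}(u) + \n_{\ar}(u) + \n_{\e}^T(u)$.

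The nullity bounds in the second displayed inequality follow by subtracting the index bounds of Theorem \ref{mt1} from the two-sided index-plus-nullity bounds just established. The main subtle ingredient is the general unconditional inequality $(\de^2\ar)(\xi) \leq (\de^2\e)(V)$, which is what allows the lower-bound projection argument to go through without a boundary condition on $X$, and the choice $W = W_- \oplus \mathcal{J}_\e$, which ensures that tangential elements of $W$ are automatically Jacobi fields rather than merely elements on which the quadratic form vanishes.
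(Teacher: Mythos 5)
Your argument is correct and is essentially the ``minor modification'' of Theorem \ref{mt1} that the paper invokes without writing out: nonpositive subspaces in place of strictly negative ones, Theorem \ref{indcomp} plus the Riemann--Roch count for the upper bound, and the projection-onto-normal-parts count of purely tangential Jacobi fields for the lower bound. The only step worth making explicit is your ``unconditional'' inequality $(\de^2 \ar)(\xi)\leq(\de^2 \e)(V)$: Taylor expansion of $\ar(u_t)\leq\e(u_t)$ compares the two Hessians along the variation, and one must add that the area Hessian at a free-boundary minimal immersion depends only on $V^{\perp}$ --- admissible tangential fields are tangent to $\rd\si$ along $\rd\si$ because $u_{*}\eta\perp T\mathcal{L}$, hence are infinitesimal reparametrizations lying in the kernel of the area Hessian --- which is the same fact the paper already uses implicitly for $\ind_{\e}(u)\leq\ind_{\ar}(u)$.
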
 

\section{A Sobolev inequality for free boundary submanifolds}\label{sec4}

\begin{defin}\label{bound}
We say that a Riemannian manifold $\mathcal{M}$ satisfies the property $(\star)$ if there is a isometric immersion $f: \mathcal{M} \to \mathcal{N}$ such that:
\begin{enumerate}
\item The second fundamental form of the immersion satisfies
$$\sup_{\mathcal{N}}| \mathrm{II}_{f}| \leq C,$$ 
for some constant $C > 0$;
\item $\mathcal{N}$ is a Cartan-Hadamard manifold such that its sectional curvature satisfy 
$$sec_{\mathcal{N}} \leq \kappa \leq 0.$$
\end{enumerate}

\end{defin}

\begin{rem} 
Observe that every compact Riemannian manifold $\mathcal{M}$ satisfies the property $(\star)$, because by Nash theorem $\mathcal{M}$ can be embedded in some Euclidean space $\re^{n}$, so condition $2$ is valid, and condition $1$ is satisfied by compactness. 
\end{rem}

%\begin{lem}\label{inebound}Let $u: (\si^n,\rd\si^n) \to (\mathcal{M}^{n+k},\rd\calM^{n+k})$ be a free boundary isometric immersion of a manifold $\si$ with boundary $\rd\si$, into an oriented complete Riemannian manifold $\mathcal{M}$ with boundary $\rd\calM$, where $n\geq 2,\,k \geq 1$. Then there is a constant $c = c(\mathcal{M}) > 0$, such that for any $\phi \in C^{1}(\si)$, \begin{equation} \frac{1}{c}\int_{\rd\si}|\phi|\ dL \leq \int_{\si}|\nabla_{\si}\phi|\ dA + \int_{\si}|\vec{H}_{u}\phi|\ dA + \int_{\si}|\phi|\ dA, \end{equation} where $\vec{H}_{u}$ is the mean curvature vector of the immersion $u$.\end{lem} \begin{proof} The proof is exactly like the one in lemma $2.1$ of \cite{E}. \end{proof}

\begin{thm}\label{edel}
Consider an oriented complete Riemannian manifold $\mathcal{M}^{n+k}$ with boundary $\rd\calM$, which satisfies the property $(\star)$, where $n\geq 2,\,k \geq 1$. Let $u:(\si,\rd\si) \to (\mathcal{M},\rd\calM)$ be a free boundary minimal immersion of an oriented Riemannian manifold $\si$ with boundary $\rd\si$, and of dimension $n$. Then there is a constant $c = c(\mathcal{M}) > 0$, such that for any $\phi \in C^{1}(\si)$,
\begin{equation}\label{sobfree}
\left( \int_\Sigma |\phi|^{\frac{n}{n-1}}dA \right)^{\frac{n-1}{n}} \leq c\int_{\si}\bigl(|\nabla_{\si}\phi| + |\phi|\bigr)dA.
\end{equation}
\end{thm}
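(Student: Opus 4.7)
The plan is to adapt Edelen's proof in \cite{E} by working in the auxiliary manifold $\mathcal{N}$ provided by property $(\star)$ instead of Euclidean space, and then to absorb the resulting boundary trace using Lemma \ref{inebound}.

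First, I would use property $(\star)$ to obtain an isometric immersion $f: \mathcal{M} \to \mathcal{N}$ with $\sup_{\mathcal{N}}|II_{f}| \leq C$, into a manifold $\mathcal{N}$ of nonpositive sectional curvature and infinite injectivity radius. Setting $\tilde u := f \circ u : (\si, \rd\si) \to \mathcal{N}$, I would compute the mean curvature vector of $\tilde u$ via the composition formula for second fundamental forms: since $u$ is minimal in $\mathcal{M}$, only the tangential trace of $II_{f}$ along $u_{*}(T\si)$ contributes, so that $|\vec H_{\tilde u}|\leq nC$ pointwise on $\si$.

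Next, I would apply a Michael-Simon-Sobolev type inequality to $\tilde u$ viewed as a submanifold with boundary of $\mathcal{N}$. Because $\mathcal{N}$ is Hadamard-type (nonpositive curvature, infinite injectivity radius), the Hoffman-Spruck inequality applies with no curvature correction; what is needed is its version for submanifolds with boundary, yielding a boundary trace term:
\begin{equation*}
\left(\int_\si|\phi|^{n/(n-1)}\,dA\right)^{(n-1)/n} \leq c_1\int_\si \bigl(|\nabla_\si\phi| + |\vec H_{\tilde u}||\phi|\bigr)\,dA + c_1\int_{\rd\si}|\phi|\,dL.
\end{equation*}
The bound $|\vec H_{\tilde u}|\leq nC$ then lets me replace $|\vec H_{\tilde u}||\phi|$ by a constant multiple of $|\phi|$ at the cost of enlarging the constant.

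Finally, I would invoke Lemma \ref{inebound}, using $\vec H_u=0$ by minimality of $u$ in $\mathcal{M}$, to estimate $\int_{\rd\si}|\phi|\,dL$ by $c_2\int_\si(|\nabla_\si\phi| + |\phi|)\,dA$. Substituting this back yields the desired inequality with a constant $c = c(\mathcal{M}) > 0$.

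The main obstacle is step two: establishing the boundary version of the Hoffman-Spruck inequality with the $\int_{\rd\si}|\phi|\,dL$ term, since the standard statement is for compactly supported test functions on a submanifold without boundary. To obtain this, I would adapt the monotonicity/isoperimetric argument used in the classical Michael-Simon proof to $\mathcal{N}$, following Edelen's template. Concretely, two options are natural: either extend $\tilde u(\si)$ across $\rd\si$ by a cone construction in $\mathcal{N}$ (a short family of geodesic arcs issuing transversally from $\rd\si$) to produce a submanifold without boundary whose mean curvature is still controlled, to which the compactly supported inequality applies and whose interior contribution is the sought boundary trace; or approximate $\phi$ by $\phi_\ep$ compactly supported in $\si$, apply the classical inequality, and track the boundary layer contribution as $\ep \to 0$.
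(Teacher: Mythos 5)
Your proposal is correct and follows essentially the same route as the paper: compose with the isometric immersion $f$ from property $(\star)$ to get a bounded mean curvature vector, establish the Hoffman--Spruck/Michael--Simon inequality with a boundary trace term by cutting off near $\rd\si$ (your second option, tracking the boundary layer as $\ep \to 0$ via a collar estimate in Fermi coordinates, is exactly what the paper does), and then absorb $\int_{\rd\si}|\phi|\,dL$ using Lemma \ref{inebound} together with $\vec{H}_u = 0$.
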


\begin{proof}
The condition $(\star)$ guarantees that we have an isometric immersion $w: \si \to \mathcal{N}$ of bounded mean curvature, where $\mathcal{N}$ is a Cartan-Hadamard manifold. So, we can use the Sobolev inequality of Hoffman-Spruck \cite{H.S}, for any function $\phi \in C^{1}(\si)$. Then, the proof is exactly like the proof of Theorem $2.2$ in \cite{E}.
\end{proof}

\begin{defin}\label{FBSI}
Given a Riemannian manifold $\mathcal{M}$ with boundary, we say that the free-boundary Sobolev inequality holds in $\mathcal{M}$, if the inequality \eqref{sobfree} is valid for all free-boundary minimal immersion $u:\si \to \mathcal{M}$ and all $\phi \in C^{1}(\si)$.
\end{defin}

\noindent
{\bf Remark}: The Theorem \ref{edel} shows that the free-boundary Sobolev inequality holds in every Riemannian manifold which satisfies the property $(\star)$.

\section{An estimate for the energy index}\label{sec5}

A reference for the definition and the properties of the heat kernels used in this section is the paper \cite{Gr}.

Let $u: (\si^{n},\rd\si^{n}) \to (\mathcal{M}^{n+k},\rd\calM^{n+k})$ be a free-boundary minimal immersion of an oriented compact manifold $\si$ with boundary $\rd\si$, into a Riemannian manifold $\mathcal{M}$ with boundary $\rd \mathcal{M}$, where $n\geq 2,\,k \geq 1$. Let $N$ be the outward pointing unit vector field orthogonal to the boundary of $\mathcal{M}$. We define the shape operator of $\rd \mathcal{M}$ by
$$S_{\rd \mathcal{M}}(X) = \nabla_{X}^{\calM}N.$$

Consider the rough laplacian $\Delta$ on $E = u^{*}(T\mathcal{M})$ defined by
$$\Delta V = \sum_{l=1}^{n} \bigl(\nabla_{e_l}\nabla_{e_l}V - \nabla_{\nabla_{e_l}e_l}V\bigr),$$
where $\{e_1,\ldots,e_n\}$ is a (locally defined) orthonormal frame field on $\si$. Also, given $\mathcal{F}, \mathcal{B} \in \ed(E)$, consider a quadratic form,
$$\mathcal{Q}(V,V) = \int_{\si} \ip{-\Delta V - \mathcal{F}(V)}{V} \, dA + \int_{\rd \si} \ip{\nabla_{\eta}V - \mathcal{B}(V)}{V}\, dL.$$

We say that $\lambda$ is an eigenvalue of $\mathcal{Q}$, if there exists $V \in \Gamma(E)$ such that $\mathcal{Q}(V,W) = \lambda\langle V,W\rangle_{L^2}, \forall\, W \in \Gamma(E)$, or equivalently
$$\left\{
\begin{array}{rl}
\Delta V + \mathcal{F}(V) + \lambda V = 0,\\\\
\nabla_{\eta}V - \mathcal{B}(V) = 0.
\end{array} \right.$$

In the remaining of this section we will assume that $V \in \Gamma(E)$ is an admissible variation with respect to $\rd \calM$. Let $\rho = \sup_{\mathcal{M}} |\mathcal{R}|$. We have
\begin{eqnarray}
\nonumber && \int_{\si} \ip{-\Delta V - \mathcal{R}(V)}{V} \, dA + \int_{\rd \si} \ip{\nabla_{\eta}V - S_{\rd \mathcal{M}}(V)}{V}\, dL\\ 
&\geq & \int_{\si} \ip{-\Delta V - \rho V}{V} \, dA + \int_{\rd \si} \ip{\nabla_{\eta}V  - S_{\rd \mathcal{M}}(V)}{V}\, dL.
\end{eqnarray}

Analysing this last inequality, we can conclude that if $V \in \Gamma(E)$ is such that the energy index form applied in $V$ is non-positive, then 
$$\mathcal{I}(V,V) \leq \rho\,|V|_{L^2}^{2},$$ 
where $\mathcal{I}$ is defined by
$$\mathcal{I}(V,V) := -\int_{\si} \ip{\Delta V}{V} \, dA + \int_{\rd \si} \ip{\nabla_{\eta}V - S_{\rd \mathcal{M}}(V)}{V}\, dL.$$
Denote by $\beta(\si)$ the number of eigenvalues of $\mathcal{I}$ less than or equal to $\rho$. It follows that
\begin{equation}\label{indb}
\ind_{\e}(u) + \n_{\e}(u) \leq \beta(\si).
\end{equation} 

In the following the covariant derivatives we will always be with respect to the $x$ variable. Consider the heat Kernel $K_{E}: \si\times\si\times (0,\infty) \to \ed(E)$ defined by 
$$\left\{
\begin{array}{rl}
\biggl(\displaystyle\frac{\rd}{\rd t} - \Delta\biggr) K_{E}(x,y,t) &=\ 0,  \quad \textrm{on} \ \si\times\si\times(0,\infty)\\\\
\displaystyle\lim_{t \to 0^+} K_{E}(x,y,t) &=\ \delta_{E}(x - y),  \quad \textrm{on} \ \si\times\si\\\\
\nabla_{\eta} K_{E}(x,y,t) - S_{\rd \mathcal{M}}\big(K_{E}(x,y,t)\big) &=\ 0,  \quad \textrm{on} \  \rd\si\times\si\times(0,\infty).
\end{array} \right.$$

Also, consider the heat Kernel $K: \si\times\si\times (0,\infty) \to \re$ on $\si$ given by
$$\left\{
\begin{array}{rl}
\biggl(\displaystyle\frac{\rd}{\rd t} - \Delta_{\si}\biggr) K(x,y,t) &=\ 0,  \quad \textrm{on} \ \si\times\si\times(0,\infty)\\\\
\displaystyle\lim_{t \to 0^+} K(x,y,t) &=\ \delta(x - y),   \quad \textrm{on} \ \si\times\si\\\\
\displaystyle\frac{\rd K}{\rd \eta}(x,y,t)  - \alpha K(x,y,t) &=\ 0,  \quad \textrm{on} \   \rd\si\times\si\times(0,\infty)
\end{array} \right.$$

\noindent
where $\Delta_{\si}$ is the laplacian acting on functions,
$$\alpha = \min\left\{0,\displaystyle\inf_{\rd \mathcal{M}}\inf_{|V| = 1} \langle S_{\rd \mathcal{M}}(V),V\rangle\right\},$$ and we suppose that $\alpha > -\infty$.
\\

Let $$\overline\lambda_1 \leq \overline\lambda_2 \leq \cdots \leq \overline\lambda_j \leq \cdots$$ 
be the spectrum of the eigenvalue problem of the rough laplacian with boundary condition $\nabla_{\eta} V - S_{\rd \mathcal{M}}(V) = 0$ and
$$\lambda_1 \leq \lambda_2 \leq \cdots \leq \lambda_j \leq \cdots$$ 
be the spectrum of the eigenvalue problem of the laplacian $\Delta_{\si}$ with boundary condition $\displaystyle\frac{\rd \phi}{\rd \eta} - \alpha \phi = 0$.

The traces $k_{E}$ and $k$ of $K_{E}$ and $K$ respectively, are given by
\begin{equation}
k_{E}(t) = \sum_{l = 1}^{\infty} e^{-\overline\lambda_l t}, \quad k(t) = \sum_{l = 1}^{\infty} e^{-\lambda_l t},\ t > 0.
\end{equation}
Thus, if $\overline\lambda_l \leq \rho$ is an eigenvalue of $\mathcal{I}$, we have $e^{-\rho t} \leq e^{-\overline\lambda_l t}, \forall t > 0$, hence
\begin{equation}\label{eingb}
\beta(\si)e^{-\rho t} \leq k_{E}(t), \ \forall t > 0.
\end{equation} 
So, to bound $\ind_{E}(\si) + \n_{\e}(u)$ it is sufficient to bound $k_{E}(t)$, which will be our main goal now. First, we will need some preliminary results about the kernels $K$ and $K_{E}$.\\

\begin{prop} The inequalities\label{lemine}
\begin{equation*}
\left\{
\begin{array}{rl}
\biggl(\displaystyle\frac{\rd}{\rd t} - \Delta_{\si}\biggr)| K_{E}| &\leq\ 0, \quad \textrm{on}\  \si\times\si\times(0,\infty).\\\\
\displaystyle\frac{\rd | K_{E}|}{\rd \eta} - \alpha| K_{E}| &\geq\ 0, \quad \textrm{on}\ \rd\si\times\si\times(0,\infty).
\end{array} \right.
\end{equation*}
hold in the sense of distributions.
\end{prop}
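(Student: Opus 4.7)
The plan is a Kato-type computation for the bundle-valued heat kernel $K_E$, performed on a smooth regularization and then passed to the distributional limit. Concretely, I would introduce $|K_E|_\epsilon := \sqrt{|K_E|^2 + \epsilon^2}$, which is smooth and strictly positive, establish the two inequalities pointwise for $|K_E|_\epsilon$, and then let $\epsilon \to 0$.

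For the interior inequality, I would differentiate $|K_E|_\epsilon^2 = |K_E|^2 + \epsilon^2$ twice and compare with the product rule $\Delta_\si|K_E|_\epsilon^2 = 2|K_E|_\epsilon\Delta_\si|K_E|_\epsilon + 2|\nabla|K_E|_\epsilon|^2$ to obtain the Bochner-type identity
$$|K_E|_\epsilon\,\Delta_\si|K_E|_\epsilon \;=\; \langle \Delta K_E, K_E\rangle + |\nabla K_E|^2 - |\nabla|K_E|_\epsilon|^2.$$
The refined Kato inequality $|\nabla|K_E|_\epsilon|^2 \leq |\nabla K_E|^2$, which holds because $\nabla$ is a metric connection (fiberwise it is just Cauchy--Schwarz), combined with the heat equation $\Delta K_E = \partial_t K_E$, then yields
$$|K_E|_\epsilon\,\Delta_\si|K_E|_\epsilon \;\geq\; \langle \partial_t K_E, K_E\rangle \;=\; |K_E|_\epsilon\,\partial_t|K_E|_\epsilon,$$
and dividing by $|K_E|_\epsilon > 0$ gives $(\partial_t - \Delta_\si)|K_E|_\epsilon \leq 0$ pointwise.

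For the boundary inequality, differentiating $|K_E|_\epsilon^2$ along $\eta$ gives $|K_E|_\epsilon\,\partial_\eta|K_E|_\epsilon = \langle \nabla_\eta K_E, K_E\rangle$. Substituting the boundary condition $\nabla_\eta K_E = II_{\rd\calM}(K_E)$ and invoking the pointwise estimate $\langle II_{\rd\calM}(V), V\rangle \geq \alpha|V|^2$ (which holds for all $V \in E$ by the definition of $\alpha$, with $\alpha \leq 0$ absorbing components of $V$ not tangent to $\rd\calM$), I obtain
$$|K_E|_\epsilon\,\partial_\eta|K_E|_\epsilon \;\geq\; \alpha|K_E|^2 \;\geq\; \alpha|K_E|_\epsilon^2,$$
where the last step again uses $\alpha \leq 0$. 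Dividing by $|K_E|_\epsilon > 0$ yields $\partial_\eta|K_E|_\epsilon \geq \alpha|K_E|_\epsilon$.

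The main obstacle will be making the passage $\epsilon \to 0$ rigorous at the distributional level: $|K_E|$ is only Lipschitz near its zero locus, so $\Delta_\si|K_E|$ and $\partial_\eta|K_E|$ must be interpreted distributionally there. The regularization is designed precisely to give smooth approximants satisfying both inequalities pointwise. Since $|K_E|_\epsilon \to |K_E|$ locally uniformly and $|\nabla|K_E|_\epsilon|$ is locally uniformly bounded by $|\nabla K_E|$, pairing each pointwise inequality with a non-negative test function and integrating by parts to throw the derivatives onto the test function reduces the limit to standard dominated convergence, closing the argument.
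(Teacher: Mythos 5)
Your proof is correct and follows essentially the same route as the paper: the interior bound is the standard Kato-type argument with the regularization $\sqrt{|K_E|^2+\epsilon^2}$, which is precisely the proof of Proposition 2.2 of Hess--Schrader--Uhlenbrock that the paper cites, and your boundary computation coincides with the paper's use of $\phi_\epsilon$, the condition $\nabla_\eta K_E = II_{\partial\mathcal{M}}(K_E)$, and $\alpha\leq 0$ before letting $\epsilon\to 0$.
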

\begin{proof}
The proof of the first inequality is exactly like the one of proposition $2.2$ in \cite{H.S.U}. 

For the second inequality, given $\epsilon > 0$ define $$\phi_{\epsilon}(x,t) = \bigl(| K_{E}(x,y,t)|^2 + \epsilon^2\bigr)^{1/2}.$$
Observe that
\begin{equation}\label{boundine}
\frac{\rd \phi_{\epsilon}}{\rd \eta} = \frac{1}{\phi_{\epsilon}}\ib{\frac{\rd K_{E}}{\rd \eta}}{K_{E}} = \frac{1}{\phi_{\epsilon}}\ip{S_{\rd \mathcal{M}}(K_{E})}{K_{E}} \geq -\frac{\alpha}{\phi_{\epsilon}}| K_{E}|^2.
\end{equation}

\noindent
Letting $\epsilon \to 0$ on equation \eqref{boundine}, it follows that
\begin{equation*}
\frac{\rd | K_{E}|}{\rd \eta} - \alpha| K_{E}| \geq 0.
\end{equation*}
in the sense of distributions.
\end{proof}

\begin{lem}\label{lemcomp}
$|K_{E}(x,y,t)| \leq K(x,y,t), \ \forall t \geq 0.$
\end{lem}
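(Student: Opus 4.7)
My plan is to run a Hess-Schrader-Uhlenbrock style scalar-vector comparison, adapted to the Robin-type boundary condition at hand, as in \cite{C.T} in the closed case. For an arbitrary smooth, compactly supported $V \in \Gamma(E)$, set
\[
u(x,t) = \int_\Sigma K_E(x,y,t)\,V(y)\,dA(y), \qquad \tilde u(x,t) = \int_\Sigma K(x,y,t)\,|V(y)|\,dA(y).
\]
Then $u$ solves the vector heat equation on $E$ with boundary condition $\nabla_\eta u = II_{\partial\mathcal M}(u)$ and initial data $V$, while $\tilde u$ solves the scalar heat equation on $\Sigma$ with Robin condition $\partial_\eta \tilde u = \alpha \tilde u$ and initial data $|V|$. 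A verbatim repetition of the $\phi_\epsilon = (|\,\cdot\,|^2 + \epsilon^2)^{1/2}$ regularization used in the proof of Proposition \ref{lemine}, now with $u$ in place of $K_E$, produces (in the distributional sense)
\[
\Big(\frac{\partial}{\partial t}-\Delta_\Sigma\Big)|u|\le 0 \ \text{on }\Sigma\times(0,\infty),\qquad \frac{\partial |u|}{\partial\eta}-\alpha|u|\ge 0 \ \text{on }\partial\Sigma\times(0,\infty),
\]
the second inequality using $\langle II_{\partial\mathcal M}(V),V\rangle \ge \alpha|V|^2$ by the very choice of $\alpha$.

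The next and key step is the pointwise bound $|u(x,t)|\le \tilde u(x,t)$. Since $|u|$ and $\tilde u$ share the initial datum $|V|$, the cleanest route is a domination of Dirichlet forms: Kato's inequality $|\nabla|V|\,|\le|\nabla V|$ together with the fibrewise bound $\langle II_{\partial\mathcal M}(V),V\rangle \ge \alpha|V|^2$ yields the form inequality
\[
\mathcal Q_E(V) := \int_\Sigma |\nabla V|^2\,dA - \int_{\partial\Sigma}\langle II_{\partial\mathcal M}(V),V\rangle\,dL \ \ge \ \tilde{\mathcal Q}(|V|) := \int_\Sigma |\nabla|V|\,|^2\,dA - \alpha\int_{\partial\Sigma}|V|^2\,dL.
\]
The abstract Hess-Schrader-Uhlenbrock semigroup comparison (or, equivalently, a Duhamel representation for the scalar Robin heat equation using non-negativity of $K$, which holds because $\alpha\le 0$) then converts this form domination into the pointwise inequality $|u(x,t)|\le\tilde u(x,t)$.

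Finally, to pass to the kernels, fix $y_0 \in \Sigma$ and a unit vector $V_0 \in E_{y_0}$, and choose a sequence of smooth compactly supported sections $V_j$ approximating the Dirac source $V_0\,\delta_{y_0}$. Applying the previous step to each $V_j$ and passing to the limit gives $|K_E(x,y_0,t)V_0|\le K(x,y_0,t)$, and supremizing over $V_0$ on the unit sphere of $E_{y_0}$ gives $|K_E(x,y_0,t)|\le K(x,y_0,t)$, as desired.

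The main obstacle is precisely the comparison step in the second paragraph: the Robin inequality $\partial_\eta|u|-\alpha|u|\ge 0$ points the ``wrong'' way for a direct Hopf-type maximum principle applied to $w:=|u|-\tilde u$, so one cannot just reduce to a scalar parabolic comparison. The Dirichlet form / semigroup domination framework bypasses this issue by encoding the boundary comparison into the quadratic form, where Kato's inequality and the defining bound on $II_{\partial\mathcal M}$ furnish the required domination.
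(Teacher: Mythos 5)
Your proposal is essentially correct and, at its core, runs on the same two ingredients as the paper's proof: the Kato-type distributional inequalities of Proposition \ref{lemine} (which you reproduce for $u$ by the same $\phi_\epsilon$-regularization) and the positivity of the scalar Robin heat kernel $K$ (available since $\alpha\le 0$), combined through a Duhamel-type pairing. The difference is organizational: the paper applies the Duhamel identity directly to the kernels, differentiating $s\mapsto\int_\Sigma |K_E(x,z,s)|\,K(z,y,t-s)\,dA(z)$ and using $\partial_\eta K=\alpha K$, $\partial_\eta|K_E|\ge\alpha|K_E|$ and $K\ge 0$ to dispose of the boundary term, whereas you argue at the level of the mollified solutions $u,\tilde u$ and then recover the kernel bound by letting the initial data tend to $V_0\,\delta_{y_0}$; this is fine, with the small remark that taking the supremum over unit $V_0$ yields the operator norm of $K_E(x,y_0,t)$, which is in any case what the subsequent trace estimate uses. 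One caution about how you phrase the key step: the purely diagonal form inequality $\mathcal{Q}_E(V)\ge\tilde{\mathcal{Q}}(|V|)$ is not by itself a sufficient criterion for pointwise domination of semigroups --- the abstract results (Hess--Schrader--Uhlenbrock \cite{H.S.U}, or the Ouhabaz-type form criteria) require a generator-level Kato inequality or an off-diagonal ``ideal'' condition, and with a Robin boundary condition those hypotheses must be checked, not just the diagonal comparison. This does not sink your argument, because the parenthetical alternative you offer --- the Duhamel representation exploiting non-negativity of $K$ --- is precisely the paper's computation with $u$ in place of $K_E$, and the distributional inequalities you already established make it go through verbatim; I would simply make that route the primary one rather than the abstract semigroup-domination theorem.
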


\begin{proof}
By definition
\begin{equation}\label{in.con}
\lim_{t \to 0^+}\int_{\si}K(x,y,t)\phi(y)dA(y) = \phi(x) \ \ \textrm{and} \ \ \lim_{t \to 0^+}\int_{\si}K_{E}(x,y,t)\circ S_ydA(y) = S_x,
\end{equation}
which implies $| K_{E}(x,y,0)| = K(x,y,0)$. Moreover, using the fundamental theorem of calculus together with equation \eqref{in.con} we have
\begin{align*}
&\quad |K_{E}(x,y,t)| - K(x,y,t) = \int_{0}^{t}\frac{\rd}{\rd s}\int_{\si}| K_{E}(x,z,s)| K(z,y,t-s)dA(z)ds\\
&=\int_{0}^{t}\int_{\si}\biggl[\biggl(\frac{\rd}{\rd s}| K_{E}(x,z,s)|\biggr)K(z,y,t-s) + | K_{E}(x,z,s)|\frac{\rd}{\rd s}\bigl(K(z,y,t-s)\bigr)\biggr]dA(z)ds\\
&=\int_{0}^{t}\int_{\si}\biggl[\biggl(\frac{\rd}{\rd s}| K_{E}(x,z,s)|\biggr)K(z,y,t-s) - | K_{E}(x,z,s)|(\Delta_{\si})_{z}K(z,y,t-s)\biggr]dA(z)ds\\
&=\int_{0}^{t}\int_{\si}\biggl(\frac{\rd}{\rd s} - (\Delta_{\si})_{z}\biggr)\bigl(| K_{E}(x,z,s)|\bigr)K(z,y,t-s)dA(z)ds\\
&+ \int_{0}^{t}\int_{\rd\si}\biggl[| K_{E}(x,z,s)|\frac{\rd K}{\rd\eta}(z,y,t-s) - \frac{\rd | K_{E}|}{\rd\eta}(x,z,t)K(z,y,t-s)\biggr]dL(z)ds\\
&\leq  \int_{0}^{t}\int_{\rd\si}\alpha\biggl(| K_{E}|(x,z,t)K(z,y,t-s) - | K_{E}|(x,z,t)K(z,y,t-s)\biggr)dL(z)ds\\ 
&= 0,
\end{align*}
where is the last inequality we used the proposition \eqref{lemine} and the positivity of $K$.
\end{proof}

Its is easy to see that $\displaystyle\int_{\si}K(x,y,t)\, dA(y)$ is non-increasing in $t$. By equation \eqref{in.con} it follows that
\begin{equation}\label{mean}
\int_{\si}K(x,y,t)\, dA(y) \leq 1,\ \forall (x,t) \in \si\times(0,\infty).
\end{equation}

We can finally prove the main results of this section.\\

\begin{dem2}
We will use the methods of \cite{C.T}. Applying the Sobolev inequality for free boundary minimal surfaces \eqref{sobfree} to $\phi^2$ we have
\begin{equation*}\label{sob2}
\biggl(\int_{\si}\phi^4\, dA\biggr)^{1/2} \leq c\int_{\si}|\nabla_{\si} \phi^2|\, dA \ + \ c\int_{\si}\phi^2\, dA.
\end{equation*}

Using interpolation on the left hand side and the Holder inequality and the Arithmetic-Geometric inequality on the right hand side we obtain
\begin{equation}\label{mainine}
\biggl(\int_{\si}\phi^2\, dA\biggr)^{3/2}\biggl/\int_{\si}|\phi|\, dA \leq c_1\int_{\si}|\nabla_{\si} \phi|^2\, dA \ + \ c_2\int_{\si}\phi^2\, dA.
\end{equation}

Define $\phi(y) = K(x,y,t/2)$. We have 
\begin{equation*}
K(x,x,t) = \int_{\si}K^2(x,y,t/2)\, dA(y) = \int_{\si}\phi^2\, dA(y).
\end{equation*}
\noindent
So,
\begin{align}
\nonumber \frac{\rd K}{\rd t}(x,x,t) &= \int_{\si}K(x,y,t/2)\frac{\rd K}{\rd t}(x,y,t/2)\, dA(y)\\
\nonumber &= \int_{\si}K(x,y,t/2)\bigl((\Delta_{\si})_y K(x,y,t/2)\bigr)\, dA(y)\\
\nonumber &= - \int_{\si}|(\nabla_{\si})_y K(x,y,t/2)|^2\, dA(y)\\ \nonumber & \quad + \int_{\rd\si}K(x,y,t/2)\frac{\rd K}{\rd \eta}(x,y,t/2)\, dL(y)\\
\nonumber &= - \int_{\si}|(\nabla_{\si})_y K(x,y,t/2)|^2\, dA(y) - \alpha\int_{\rd\si}K^2(x,y,t/2)\, dL(y)\\
&\leq  - \int_{\si}|(\nabla_{\si})_y K(x,y,t/2)|^2\, dA(y).\label{der}
\end{align}

Substituting this in inequality \eqref{mainine}, and using the inequality \eqref{mean} it follows that
\begin{equation}
\bigl(K(x,x,t)\bigr)^{3/2} \leq - c_1\frac{\rd K}{\rd t}(x,x,t) + c_2 K(x,x,t).
\end{equation}

Let $\psi(t) = K^{-\frac{1}{2}}(x,x,t)$. Observe that $\psi(0) = 0$ $\bigl($because of equation \eqref{in.con}$\bigr)$. Multiplying both sides of the last inequality by $1/\bigl(2c_1\bigl(K(x,x,t)\bigr)^{3/2}\bigr)$ we obtain
 
\begin{equation*}
\frac{1}{2c_1}\leq \psi'(t) + \frac{c_2}{2c_1}\psi(t),
\end{equation*}
which implies,
\begin{equation*}
\psi(t) \geq \frac{1}{c_2}\bigl(1 - e^{-(c_{2}t/2c_{1})}\bigr).
\end{equation*}
Thus,
\begin{equation*}
K(x,x,t) \leq \frac{c_2^2}{\bigl(1 - e^{-(c_{2}t/2c_{1})}\bigr)^2}.
\end{equation*}

It follows that,
\begin{equation*}
k_{E}(t) \leq (n - 2)k(t) = (n - 2)\int_{\si}K(x,x,t)\,dA \leq \frac{(n- 2)c_2^2}{\bigl(1 - e^{-(c_{2}t/2c_{1})}\bigr)^2}|u(\si)|.
\end{equation*}
Therefore, using the inequalities \eqref{indb} and \eqref{eingb} we obtain
\begin{equation*}
\ind_{\e}(u) + \n_{\e}(u) \leq \min_{t>0}\Biggl(\frac{(n- 2)c_2^2e^{\rho t}}{\bigl(1 - e^{-(c_{2}t/2c_{1})}\bigr)^2}\Biggr)|u(\si)|.
\end{equation*}
\end{dem2}

Now, we will handle the case of higher dimensions. Consider an compact oriented manifold $\si^{n}$ with boundary $\rd\si$, and a Riemannian manifold $\mathcal{M}^{n+k}$ with boundary $\rd \mathcal{M}$, where $n \geq 3$ and $k\geq 1$. Let $u: (\si,\rd\si) \to (\mathcal{M},\rd\calM)$ be a free-boundary minimal immersion. Let $\Pi_1:E \to \si$ and $\Pi_2:F \to \rd\calM$ be Riemannian vector bundles such that $F|_{\rd\si} = E|_{\rd\si}$. Denote by $\Delta_{E}$ the rough laplacian of $E$, and let $\mathcal{S} \in \ed(E)$, $\mathcal{B} \in \ed(F)$. On this setting we have the following result.

\begin{thm}\label{taux}
Let $\si$, $\calM$ and $u$ as in the previous paragraph. Suppose that the free-boundary Sobolev inequality \eqref{sobfree} holds in $\mathcal{M}$ and that $|S_{\rd\calM}|$ is bounded. Then, there is a constant $c = c(n,\mathcal{M}) > 0$ such that the number of non-positive eigenvalues of $\Delta_E + \mathcal{S}$  with boundary condition $\nabla_{\eta}V - \mathcal{B}(V) = 0$ (denoted by $\beta_{E}$) satisfies
\begin{equation}
\beta_{E} \leq c\int_{\si}\Bigl(\max\bigl\{1,|\mathcal{S}|\bigr\}\Bigr)^{\frac{n}{2}}\,dA.
\end{equation}
\end{thm}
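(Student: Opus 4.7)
The plan is to adapt, to arbitrary dimension $n$ and to the bundle $E$ with potential $\mathcal{S}$, the strategy used in the proof of Theorem \ref{mt2}: bound the eigenvalue count by a scalar heat-kernel trace, use the free-boundary Sobolev inequality \eqref{sobfree} to estimate this trace, and optimize in the time variable. Let $L = -(\Delta_E + \mathcal{S})$ acting on sections of $E$ with boundary condition $\nabla_\eta V = \mathcal{B}(V)$, and let $K_E^L$ be its heat kernel. If $\mu_1 \leq \mu_2 \leq \cdots$ are the eigenvalues of $L$, then for every $t > 0$,
\begin{equation*}
\beta_E = \#\{j : \mu_j \leq 0\} \;\leq\; \sum_{\mu_j \leq 0} e^{-\mu_j t} \;\leq\; \operatorname{tr}\bigl(e^{-tL}\bigr) \;=\; \int_\Sigma \operatorname{tr}_E K_E^L(x,x,t)\, dA,
\end{equation*}
so it suffices to bound this trace integral.

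Next, writing $\rho(x) := \max\{1,|\mathcal{S}(x)|\}$, I would extend the Kato-type inequalities of Proposition \ref{lemine} and the parabolic comparison in Lemma \ref{lemcomp} to include the potential $\mathcal{S}$ and the boundary endomorphism $\mathcal{B}$. Using $\Delta|V|^2 = 2\langle\Delta V,V\rangle + 2|\nabla V|^2$ together with $|V|\Delta|V| \leq \tfrac12 \Delta|V|^2$, the nonnegative function $|K_E^L(x,\cdot,t)|$ becomes a subsolution, in $x$, of the scalar heat equation $\partial_t u = \Delta u + \rho\, u$ with Robin condition $\partial_\eta u = \alpha u$, where $\alpha$ depends on a lower bound for $\mathcal{B}$ and $II_{\partial\mathcal{M}}$. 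The comparison argument of Lemma \ref{lemcomp} then gives $|K_E^L(x,y,t)| \leq K^\rho(x,y,t)$, where $K^\rho$ is the scalar heat kernel of $\partial_t - \Delta - \rho$ with that Robin condition. In particular $\operatorname{tr}_E K_E^L(x,x,t) \leq (\operatorname{rank} E)\, K^\rho(x,x,t)$.

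The heart of the proof is then the pointwise bound on $K^\rho(x,x,t)$. I would first promote the free-boundary Sobolev inequality \eqref{sobfree} to its $L^2 \to L^{2n/(n-2)}$ form by substituting $\phi \mapsto \phi^{2(n-1)/(n-2)}$ and using Hölder/Young as in the passage from \eqref{sob2} to \eqref{mainine}, obtaining
\begin{equation*}
\|\phi\|_{2n/(n-2)}^{2} \;\leq\; c_1\|\nabla \phi\|_{2}^{2} \;+\; c_2\|\phi\|_{2}^{2}, \qquad \phi \in C^1(\Sigma).
\end{equation*}
Applied to $\phi(y) = K^\rho(x,y,t/2)$, combined with the identity
\begin{equation*}
\frac{d}{dt}K^\rho(x,x,t) \;=\; -\int_\Sigma |\nabla_y K^\rho|^2\, dA + \int_\Sigma \rho\,(K^\rho)^2\, dA + \alpha\int_{\partial\Sigma}(K^\rho)^2\, dL
\end{equation*}
and the elementary $L^1$ bound $\|K^\rho(x,\cdot,t)\|_1 \leq e^{t\rho(x)}$ (obtained by integrating the heat equation with the potential $\rho$, or from a Feynman–Kac comparison), a Nash-type differential inequality of Li–Yau/Cheng–Tysk type yields a pointwise estimate of the form $K^\rho(x,x,t) \leq C\, t^{-n/2} e^{Ct\rho(x)}$ for small $t$. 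Integrating against $dA$ and choosing $t \sim 1/\sup_\Sigma \rho$, one concludes $\beta_E \leq c\int_\Sigma \rho^{n/2}\, dA$ after a standard $t$-optimization.

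The main obstacle is the last point: ensuring that the heat-kernel estimate localizes the dependence on $\rho$ so that the final bound scales as $\int \rho^{n/2}$, rather than the crude $(\sup\rho)^{n/2}|\Sigma|$. In the Nash iteration the potential $\rho$ enters additively in the exponent via $e^{Ct\rho(x)}$, and the delicate balance that recovers the correct power $\rho(x)^{n/2}$ pointwise — equivalently, a CLR-type scaling between the $t^{-n/2}$ heat-kernel decay coming from \eqref{sobfree} and the $e^{Ct\rho(x)}$ Feynman–Kac factor — is the technical step that most closely mirrors, and extends to higher dimension, the treatment carried out for $n=2$ in Theorem \ref{mt2}.
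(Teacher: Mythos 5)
Your reduction of $\beta_E$ to a scalar problem---Kato-type inequalities as in Proposition \ref{lemine} plus the parabolic comparison of Lemma \ref{lemcomp} to dominate the bundle kernel by a scalar Robin heat kernel---parallels the paper and is sound. The genuine gap is the final step, which you yourself flag as ``the main obstacle'': passing from the Sobolev inequality \eqref{sobfree} to a bound scaling like $\int_\Sigma \rho^{n/2}\,dA$ with $\rho=\max\{1,|\mathcal{S}|\}$. As written, your route cannot close. First, the claimed pointwise bound $K^\rho(x,x,t)\leq C\,t^{-n/2}e^{Ct\rho(x)}$ is unjustified: a Feynman--Kac comparison controls the diagonal by $\rho$ averaged along paths near $x$ (effectively a local supremum), not by $\rho(x)$ alone, and the Nash iteration you invoke, run with the potential kept as a zeroth-order term, produces constants depending on $\sup_\Sigma\rho$. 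Second, even granting that pointwise bound, a single global choice $t\sim 1/\sup_\Sigma\rho$ gives $t^{-n/2}\int_\Sigma e^{Ct\rho}\,dA\lesssim(\sup_\Sigma\rho)^{n/2}|\Sigma|$, and no global $t$-optimization can recover $\int_\Sigma\rho^{n/2}\,dA$: if $\rho=M$ on a set of measure $\sim M^{-n/2}$ and $\rho=1$ elsewhere, then $\int_\Sigma\rho^{n/2}\,dA$ stays bounded as $M\to\infty$ while $\inf_{t>0}\,t^{-n/2}\int_\Sigma e^{Ct\rho}\,dA$ blows up. So your sketch yields only the crude bound $(\sup_\Sigma\rho)^{n/2}|\Sigma|$, not the statement of the theorem.

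The missing idea, which is how the paper (following Cheng--Tysk) proceeds, is to absorb the potential into the operator and the measure rather than into a Feynman--Kac exponent: set $p=\max\{1,|\mathcal{S}|\}$, note that the number of non-positive eigenvalues of $\Delta_\Sigma+q$ with Robin condition is at most the number of eigenvalues $\lambda\leq 1$ of $-\tfrac1p\Delta_\Sigma$ taken with respect to the weighted measure $p\,dA$, and work with the kernel $K$ of $\tfrac1p\Delta_\Sigma-\tfrac{\partial}{\partial t}$ and its weighted trace $k(t)=\sum_l e^{-2\lambda_l t}=\int_\Sigma\int_\Sigma K^2(x,y,t)\,p(x)p(y)\,dA(y)\,dA(x)$. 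Applying \eqref{sobfree} to $\phi=K^{2(n-1)/(n-2)}(x,\cdot,t)$, splitting by H\"older, and using the auxiliary function $P(x,t)=\int_\Sigma K(x,y,t)\,p^{(n+2)/4}(y)\,dA(y)$, whose weighted $L^2$-norm is non-increasing and bounded by $\int_\Sigma p^{n/2}\,dA$, one gets the Nash-type inequality $k^{(n+2)/n}\bigl(\int_\Sigma p^{n/2}\,dA\bigr)^{-2}\leq -\tfrac{c_1}{2}k'+c_2 k$; integrating this and paying only a factor $e^{2t}$ for the eigenvalues $\leq 1$ gives $\beta_E\leq c\int_\Sigma p^{n/2}\,dA$ with no $\sup|\mathcal{S}|$ anywhere. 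Without this weighting device (or a genuinely CLR-type spectral inequality, which is a much stronger input than \eqref{sobfree}), the step you identify as delicate remains an actual gap rather than a technicality.
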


\begin{proof}
Reasoning as we did to prove the inequality \eqref{indb}, we conclude that it suffices to estimate the number of non-positive eigenvalues of the operator $\Delta_{\si} + q$ with boundary condition $\displaystyle\frac{\rd \phi}{\rd \eta} - \alpha \phi = 0$, where $q = |\mathcal{S}|$ and $\alpha = \min\left\{0,\displaystyle\inf_{\rd\calM}\inf_{|V| = 1} \langle\mathcal{B}(V),V\rangle\right\}$. The calculations are analogous to that of theorem $1$ in \cite{C.T} and are of the same spirit of the proof of the previous theorem, so for the sake of brevity we will only sketch the main steps.

Let $f = \max\{1,q\}$. Define $K_{E}$ as the kernel of $\displaystyle\frac{1}{p}\Delta_{E} - \frac{\rd}{\rd t}$ with boundary condition $\nabla_{\eta}K_{E} = \mathcal{B}(K_{E})$, and $K$ as the kernel of $\displaystyle\frac{1}{p}\Delta_{\si} - \frac{\rd}{\rd t}$ with boundary condition $\displaystyle\frac{\rd K}{\rd \eta} - \alpha K = 0$. On this setting, lemmas \ref{lemine} and \ref{lemcomp} are still valid. Let $\{\lambda_l\}_{l=0}^{\infty}$ be the eigenvalues of $\displaystyle\frac{1}{p}\Delta_{\si}$ with boundary condition $\displaystyle\frac{\rd \phi}{\rd \eta} - \alpha \phi = 0$. 

Define 
\begin{equation}\label{deftrace}
k(t) = \displaystyle\sum_{l=1}^{\infty}e^{-2\lambda_{l}t} = \int_{\si}\int_{\si} K^{2}(x,y,t)f(x)f(y)\ dA(y)dA(x).
\end{equation}
Arguing as we did to prove inequality \eqref{der} we obtain
\begin{equation}\label{der2}
\frac{dk}{dt} \leq -2\int_{\si}f(x)\biggl(\int_{\si}\bigl|(\nabla_{\si})_y K(x,y,t)\bigr|^2\ dA(y)\biggr)dA(x),
\end{equation}
and repeated applications of the H\"older inequality give us
\begin{equation}\label{bigine}
\begin{split}
k(t) \leq \ &\Biggl[\int_{\si}f(x)\biggl(\int_{\si}K^{\frac{2n}{n-2}}(x,y,t)\ dA(y)\biggr)^{\frac{n-2}{n}}dA(x)\Biggr]^{\frac{n}{n+2}}\\
\cdot &\Biggl[\int_{\si}f(x)\biggl(\int_{\si}K(x,y,t)f^{\frac{n+2}{4}}(y)\ dA(y)\biggr)^{2}dA(x)\Biggr]^{\frac{2}{n+2}}.
\end{split}
\end{equation}

Defining, $P(x,t) = \displaystyle\int_{\si}K(x,y,t)f^{\frac{n+2}{4}}(y)\ dA(y)$, we have
\begin{equation*}
\left\{
\begin{array}{rl}
\biggl(\displaystyle\frac{1}{p}(\Delta_{\si})_x - \displaystyle\frac{\rd}{\rd t}\biggr)P(x,t) &\leq\ 0, \quad \textrm{on}\ \si\times(0,\infty)\\\\
\displaystyle\frac{\rd P}{\rd \eta} - \alpha P &=\ 0, \quad \textrm{on}\ \rd\si\times(0,\infty)\\\\
P(x,0) &=\ f^{\frac{n-2}{4}}(x), \quad \textrm{on}\ \si.
\end{array} \right.
\end{equation*}

Again, as in the inequality \eqref{der} we obtain $\displaystyle\frac{d}{dt}\int_{\si}P^{2}(x,t)f(x)\ dA(x) \leq 0.$ Hence 
\begin{equation*}
\int_{\si}P^{2}(x,t)f(x)\ dA(x) \leq \int_{\si}P^{2}(0,t)f(x)\ dA(x) = \int_{\si}f^{\frac{n}{2}}(x)\, dA(x).
\end{equation*}
\noindent
Thus, the inequality \eqref{bigine} can be written as
\begin{equation*}\label{bigine2}
k^{\frac{n+2}{n}}(t)\biggl(\int_{\si}f^{\frac{n}{2}}(x)\ dA(x)\biggr)^{-2} \leq \int_{\si}f(x)\biggl(\int_{\si}K^{\frac{2n}{n-2}}(x,y,t)\ dA(y)\biggr)^{\frac{n-2}{n}}dA(x).
\end{equation*}

Choose $\phi(y) = K^{\frac{2(n-1)}{(n-2)}}(x,y,t)$ in the {\it free boundary Sobolev inequality} \eqref{sobfree}, square the inequality obtained and apply to inequality \eqref{bigine} to obtain
\begin{equation*}\label{bigine3}
\begin{split}
&\quad\ k^{\frac{n+2}{n}}(t)\biggl(\int_{\si}f^{\frac{n}{2}}(x)\, dA(x)\biggr)^{-2}\\
&\leq \int_{\si}f(x)\biggl(c_{1}\int_{\si}\bigl|(\nabla_{\si})_y K(x,y,t)\bigr|^2\, dA(y) + c_{2}\int_{\si} K^{2}(x,y,t)\, dA(y)\biggr)\\
&\leq -\frac{1}{2}c_{1}\frac{dk}{dt} + c_{2}k(t),
\end{split}
\end{equation*}
where on the last inequality we used equations \eqref{deftrace} and \eqref{der2}, and the fact that $f(y) \geq 1, \ \forall y \in \si$.

Following the method of the proof of the previous theorem to solve this differential inequality and using the inequalities \eqref{indb} and \eqref{eingb} we obtain
\begin{equation*}
\beta_{E} \leq \min_{t>0}\Biggl(\frac{c_{2}^{n/2}e^{2t}}{(1 - e^{-4c_{2}t/(nc_1)})^{n/2}}\Biggr)\int_{\si}f^{\frac{n}{2}}(x)\, dA.
\end{equation*}
\end{proof}

In the following, if $\phi$ is a function defined on $\mathcal{M}$ then we denote $\int_{\si}\phi\,dA := \int_{\si}\phi\circ u\, dA$. Applying this last result to the Jacobi operators of the enrgy functional and of the area functional we obtain the Theorem \ref{t3}.

In the following we continue in the setting of the previous theorem. We denote by $\beta_{p}(\si,\rd\si)$ the $p$-th relative Betti number of $\si$, by $\mathrm{II}_{\si}$ the second fundamental form of $\si$, and by $R^{\si}$ the curvature tensor of $\si$. Given $\xi \in N\si$, the shape operator $S_{\xi}:\Gamma(T\si) \to \Gamma(T\si)$ associated to $\xi$ is defined by
$$\langle S_{\xi}X,Y\rangle = -\langle \mathrm{II}_{\si}(X,Y),\xi\rangle.$$

We define
\begin{align*}
\mathcal{R}_{p}^{\si}(\omega)(X_1,\ldots,X_m) &:= \sum_{j=1}^{n}\sum_{l=1}^{p}(-1)^{l}\bigl(R^{\si}(e_j,X_l)\omega\bigr)(e_j,X_1,\ldots,\hat{X_l},\ldots,X_m),\\
S_{\xi}^{p}\omega\big(X_1,\ldots,X_m) &:= \sum_{j=1}^{p}\omega\bigl(X_1,\ldots,S_{\xi}X_m,\ldots,X_m\bigr),
\end{align*}
where $\omega$ is a $m$-form, $\{e_1,\ldots,e_n\}$ is a (locally defined) orthonormal frame field on $\si$, and $X_1,\cdots,X_m$ are tangent vectors of $\si$. Associated to the curvature tensor $R^{\calM}$ of $\calM$, we define $\mathcal{R}_{p}^{\calM}$ analogously to the case of $\si$.

Let $\xi_1\ldots,\xi_k$ be a (local) orthonormal frame for $N\si$. In \cite{Savo}, Savo obtained the following decomposition
\begin{align}\label{decomp}
\mathcal{R}_{p}^{\si} = \mathcal{R}_{p}^{\calM} + \sum_{j=1}^{k}\Big[S_{\xi_j}^{p}\circ S_{\xi_j}^{p}-(\mathrm{tr}S_{\xi_j}^{p})S_{\xi_j}^{p}\Big].
\end{align}

Since we assume the immersion $u$ is minimal, we have $\mathrm{tr}\,S_{\xi_j} = 0$, so we can write the second term on the right-hand side of the last equation as $$\mathcal{B}_{p} := \sum_{j=1}^{k}S_{\xi_j}^{p}\circ S_{\xi_j}^{p}.$$

\begin{rem}
On \cite{Savo}, the shape operator $S_{\xi}$ and the curvature tensor $R$ are both defined as the negative of ours, so we adapted the formulas accordingly with the changes of signs.
\end{rem}

\begin{cor}\label{Bochner}
Under the same assumptions of Theorem \eqref{taux}, there is a constant $c = c(n,\calM) > 0$ such that
\begin{equation*}
\beta_{p}(\si,\rd\si) \leq c\int_{\si}\Bigl(\max\bigl\{1,|\mathcal{R}_{p}^{\calM}| + |\mathcal{B}_{p}|\bigr\}\Bigr)^{\frac{n}{2}} dA.
\end{equation*}
\end{cor}

\begin{proof}
Consider the bundle of harmonic $m$-forms on $\si$ that are normal to the boundary, i.e,
$$\mathcal{H}_{T}^{p} = \{\omega \in \Omega^{p}(\si)/d\omega = 0, d^{*}\omega = 0 \ \textrm{and} \ \eta\wedge\omega = 0 \ \textrm{on} \ \rd\si\},$$
where $d$ denotes the exterior differential and $d^{*} = (-1)^{n(m+1)+1}*d\,*$. By the Bochner formula
\begin{equation}
0 = \bigl\langle(dd^{*} + d^{*}d)\omega,\omega\bigr\rangle = \bigl\langle -\Delta_{\si}\,\omega + \mathcal{R}_{p}^{\si}(\omega),\omega\bigr\rangle.
\end{equation}

Integrating by parts, it follows that
\begin{equation}
\int_{\si}\bigl(|\nabla_{\si}\,\omega|^2 + \langle\mathcal{R}_{p}^{\si}(\omega),\omega\rangle\bigr)dA - \int_{\rd\si}\langle\nabla_{\eta}\,\omega,\omega\rangle\, dL = 0.
\end{equation}

Moreover, $\langle\nabla_{\eta}\,\omega,\omega\rangle = -H_{\rd\calM}|\omega|^{2}$, see lemma $6$ in \cite{A.C.S3}. Therefore, an eigenvector associated to a zero eigenvalue of the operator $\Delta_{\si}\,\omega - \mathcal{R}_{p}(\omega)$ with boundary condition $\nabla_{\eta}\,\omega = H_{\rd\calM}\omega$, correspond to an harmonic $m$-form normal to the boundary. By the Hodge-de Rham theorem $\beta_{p}(\si,\rd\si) = \dim\mathcal{H}_{T}^{p}$. So applying the last theorem and using equation \eqref{decomp} we obtain the desired bound.
\end{proof}

\begin{rem}
Examining the proofs of the Sobolev inequality on \cite{E,H.S}, we see that the constants appearing on Theorems \ref{t2}, \ref{t3} and \ref{taux}, and on Corollary \ref{Bochner}, depend only on the dimension $n$ of $\si$ and the extrinsic curvature of $\partial\mathcal{M}$.
\end{rem}

\section{Applications}\label{sec6}

In this section $\mathcal{M}$ denotes a $3$-dimensional oriented compact Riemannian manifold with boundary $\rd \mathcal{M}$. Let $\si \subset \mathcal{M}$ be a minimal surface with boundary $\rd\si \subset \rd \mathcal{M}$, such that $\si$ is orthogonal to $\rd \mathcal{M}$ along $\rd\si$. We denote the area index and the area nullity of the inclusion map $\si \to \mathcal{M}$, by $\ind_{\ar}(\si)$ and $\n_{\ar}(\si)$ respectively. 

Following \cite{G.Z}, we say that $\si$ is {\it almost properly embedded}, if $\si$ is embedded and $\rd\si \subset \rd \mathcal{M}$. We denote by $\mathcal{S}(\Sigma)$ the touching set, $\mathrm{int}(\si)\cap \partial \mathcal{M}$, and define $\mathcal{R}(\Sigma) = \Sigma\setminus\mathcal{S}(\Sigma)$ as the {\em proper subset} of $\Sigma$. If $\mathcal{S}(\si) = \emptyset$, then we say that $\Sigma$ is \emph{properly embedded}; otherwise, we say that $\Sigma$ is \emph{improper}.

\subsection{Compactness}\label{sec6.1}

Define
\begin{equation*}
\begin{split}
&\mathfrak{M}_{\Lambda}  = \{\si \subset \mathcal{M} / \ \si \ \textrm{is a compact almost properly embedded}\\
&\textrm{free-boundary minimal surface, such that} \ |\si| + |\chi(\si)| \leq \Lambda\}.
\end{split}
\end{equation*}

We see from Theorem \ref{t2} that controlling the area and the topology of a free-boundary immersion we also control its index. So, combining \ref{t2} with the main results in \cite{A.C.S1,G.Z} we obtain:

\begin{thm}\label{compac}
Let $\mathcal{M}$ as above. For fixed $\Lambda > 0$, suppose that $\{\si_k\}$ is a sequence of orientable surfaces in $\mathfrak{M}_{\Lambda}$. Then there exists a finite set of points $\mathcal{Y} \subset \mathcal{M}$ and $\Sigma \subset \mathfrak{M}_{\Lambda}$ such that, up to a subsequence, $\Sigma_k$ converges smoothly and locally uniformly to $\Sigma$ on $\Sigma\setminus \mathcal{Y}$ with multiplicity $m \in \N$. Furthermore:
\begin{enumerate}
\item If $\si$ is orientable, then
\begin{enumerate}
\item $m=1$ if and only if $\mathcal{Y}=\emptyset$, and $\si_k\simeq \si$ eventually;
\item $m\geq 2$ if and only if $\mathcal{Y}\neq\emptyset$, and $\si$ is stable with $\n(\si) =1$. 
\end{enumerate}
\item If $\si$ is non-orientable and $\widetilde{\si}$ is its double cover, then $m\geq 2$ implies $\widetilde{\si}$ is stable, $\n(\widetilde{\si}) =1 $ and $\lambda_1(\si) >0$. In this case, $\mathcal{Y}=\emptyset$ implies $m=2$ and $\si_k\simeq \widetilde{\si}$ eventually. 
\end{enumerate}
\end{thm}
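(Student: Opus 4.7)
The plan is to combine Theorem B (Theorem \ref{mt2}) with the existing compactness results for almost properly embedded free-boundary minimal surfaces of bounded index and bounded area, essentially using the hypothesis $|\si| + |\chi(\si)| \leq \Lambda$ to convert a topological/geometric bound into an index bound.

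First, I would show that the class $\mathfrak{M}_{\Lambda}$ has uniformly bounded area index. For any $\si \in \mathfrak{M}_{\Lambda}$, Theorem A gives $\ind_{\ar}(\si) \leq \ind_{\e}(\si) + \upsilon(g,m)$, where $\upsilon(g,m) \leq 6g - 6 + 3m$ in the hyperbolic case and is at most $1$ otherwise. Since $\chi(\si) = 2 - 2g - m$, one has $6g - 6 + 3m \leq 3|\chi(\si)| + C$ for some universal $C$, so $\upsilon(g,m) \leq 3\Lambda + C$. Theorem B (applied in the orientable case, or after passing to the orientation double cover in the non-orientable case) yields $\ind_{\e}(\si) \leq c(\calM)|\si| \leq c(\calM)\Lambda$. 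Combining these gives a uniform bound $\ind_{\ar}(\si) \leq C'(\calM,\Lambda)$ for every $\si \in \mathfrak{M}_{\Lambda}$.

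Next I would invoke the compactness theorems in \cite{A.C.S1} (for properly embedded surfaces) and \cite{G.Z} (for almost properly embedded surfaces), which assert precisely the stated conclusion for any sequence of almost properly embedded free-boundary minimal surfaces with uniformly bounded area and uniformly bounded index. Since we have just established both bounds on $\mathfrak{M}_{\Lambda}$, we may apply these theorems directly to the sequence $\{\si_k\}$: there exists a finite set $\mathcal{Y} \subset \calM$ and a limit $\si \in \mathfrak{M}_{\Lambda}$ such that, up to subsequence, $\si_k \to \si$ smoothly and locally uniformly on $\calM \setminus \mathcal{Y}$ with some multiplicity $m \in \N$, and the dichotomy in items (1) and (2) holds (multiplicity one forcing $\mathcal{Y} = \emptyset$, higher multiplicity forcing stability with nullity one, and in the non-orientable case the analogous statement on the double cover $\widetilde{\si}$).

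The only nontrivial point in the plan is verifying that the limit $\si$ still lies in $\mathfrak{M}_{\Lambda}$: the area bound passes to the limit by the monotonicity/lower-semicontinuity of area under varifold convergence counted with multiplicity, and the Euler characteristic bound on $\si$ follows from the smoothness of the convergence on $\calM \setminus \mathcal{Y}$ together with standard topological control near the finitely many singular points (the genus and number of boundary components of $\si$ cannot exceed those of $\si_k$ for large $k$). I expect this bookkeeping, together with correctly applying Theorem B on the orientation double cover in case (2), to be the main verification step, but no new ideas are required beyond what is already in \cite{A.C.S1, G.Z}.
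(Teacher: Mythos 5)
Your proposal is correct and follows essentially the same route as the paper, which proves the theorem exactly by noting that the area and topology bounds defining $\mathfrak{M}_{\Lambda}$ yield a uniform area index bound via Theorems A and \ref{mt2}, and then invoking the bounded-index-and-area compactness results for free-boundary minimal surfaces of Ambrozio--Carlotto--Sharp and Guang--Zhou. The only cosmetic remarks are that the relevant Ambrozio--Carlotto--Sharp reference is the free-boundary compactness paper (cited as \cite{A.C.S4} in the introduction) rather than the closed case, and that the double-cover step is unnecessary for the sequence itself since the $\si_k$ are assumed orientable (non-orientability only concerns the limit, which is handled by the cited compactness theorems).
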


\begin{rem}
To prove the previous result in the properly embedded case we could alternatively use the main result of \cite{Wh}.
\end{rem}

\subsection{Index estimates}

As corollary of Theorem \ref{t1} we have the following index estimates.

\begin{prop}
Let $\mathcal{M}$ be a Riemannian manifold with $\partial\calM \neq \emptyset$. Suppose that $\mathcal{M}$ satisfies 
$$sec_\mathcal{M} \leq -\kappa \leq 0 \ \ \textrm{and} \ \ S_{\rd \mathcal{M}} \leq -\alpha\ip{\cdot}{\cdot} \leq 0.$$ Let $\si \subset \mathcal{M}$ be an immersed orientable compact free boundary minimal surface. Then
$$\ind_{\ar}(\si) + \n_{\ar}(\si) \leq \upsilon(g,m).$$
\end{prop}

\begin{proof}
Let $u:\si \to \mathcal{M}$ be the isometric immersion of $\si$ on $\mathcal{M}$. The conditions on $\mathcal{M}$ guarantee that $\ind_{\e}(u) = \n_{\e}(u) = 0$, so the result follows from Theorem \ref{nulcomp}.
\end{proof}

For interest results about free-boundary minimal surfaces in manifolds satisfying the properties of the previous theorem see \cite{E.R}. Also, is worth mentioning that by an easy adaptation of the proof of lemmas $4.1$ and $5.1$ in \cite{E.R}, we obtain the following area estimates, which combined with the previous result allow us to apply Theorem \ref{compac}.

\begin{prop}
Let $\mathcal{M}$ be a $3$-dimensional Riemannian manifold with $\partial\calM \neq \emptyset$. Suppose that $\mathcal{M}$ satisfies 
$$sec_\mathcal{M} \leq -\kappa < 0 \ \ \textrm{and} \ \ S_{\rd \mathcal{M}} \leq -\alpha\ip{\cdot}{\cdot} \leq 0.$$ Let $\si \subset \mathcal{M}$ be a orientable properly immersed compact free boundary minimal surface. Then
\begin{equation}
\kappa|\si| - \alpha|\rd\si| \leq -2\pi\chi(\si).
\end{equation}
Moreover, equality holds if, and only if, $\Sigma$ is totally geodesic, $K_{sect} \equiv -\kappa$ along $\Sigma$, $K_\Sigma = -\kappa$, and $k_{g} = -\alpha$, where $k_{g}$ is the geodesic curvature of $\rd\si$ as seen as a curve inside $\si$.
\end{prop}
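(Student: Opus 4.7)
The plan is to apply the Gauss-Bonnet theorem for a surface with boundary to $\si$ and to bound both the intrinsic curvature integrand and the geodesic curvature integrand pointwise, using the Gauss equation together with minimality for the first, and the free-boundary condition together with the assumption $II_{\rd\calM} \leq -\alpha\ip{\cdot}{\cdot}$ for the second.

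First I would combine the Gauss equation with minimality. For the two-dimensional $\si$ inside the three-dimensional $\calM$ the Gauss equation reads $K_{\si} = K_{sect}(T_{p}\si) + \det B_{\si}$. Since $\tr B_{\si} = 0$ the principal curvatures are of the form $\pm\lambda$, hence $\det B_{\si} = -\lambda^{2} \leq 0$. Combined with the hypothesis $sec_{\calM} \leq -\kappa$ this gives the pointwise bound
$$K_{\si} \leq K_{sect}(T_{p}\si) \leq -\kappa,$$
and equality at a point forces both $B_{\si} = 0$ there (so $\si$ is totally geodesic) and $K_{sect}|_{T\si} \equiv -\kappa$, which then also yields $K_{\si} \equiv -\kappa$.

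Next I would compute the geodesic curvature $k_{g}$ of $\rd\si$, viewed as a curve in $\si$, using the free-boundary condition. Let $T$ denote the unit tangent of $\rd\si$ and $W$ the outward unit normal of $\rd\calM$. Since $\si$ meets $\rd\calM$ orthogonally, the outward unit conormal $\eta$ of $\rd\si$ in $\si$ coincides with $W$. Differentiating the identity $\ip{T}{W} = 0$ along $T$, and using the convention $II_{\rd\calM}(X,Y) = \ip{\nabla_{X}W}{Y}$, one obtains $\ip{\nabla_{T}T}{W} = -II_{\rd\calM}(T,T)$. With the Gauss-Bonnet sign convention $k_{g} = -\ip{\nabla_{T}T}{\eta}$, this gives
$$k_{g} = II_{\rd\calM}(T,T) \leq -\alpha,$$
with equality iff $II_{\rd\calM}(T,T) \equiv -\alpha$ along $\rd\si$.

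Finally, inserting these two pointwise estimates into the Gauss-Bonnet formula
$$2\pi\chi(\si) = \int_{\si} K_{\si}\,dA + \int_{\rd\si} k_{g}\,dL$$
yields at once $2\pi\chi(\si) \leq -\kappa|\si| - \alpha|\rd\si|$, which rearranges to the claimed inequality. The rigidity statement is then obtained by collecting the equality cases of the two pointwise bounds. The argument is essentially routine; the only step requiring some care is keeping the sign conventions consistent among the outward conormal of $\rd\si$ in $\si$, the outward normal $W$ of $\rd\calM$, and the chosen sign in $II_{\rd\calM}$, so that the hypothesis $II_{\rd\calM} \leq -\alpha\ip{\cdot}{\cdot}$ translates correctly into $k_{g} \leq -\alpha$ rather than into an inequality of the opposite sign.
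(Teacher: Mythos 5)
Your argument is correct and is essentially the paper's intended proof: the paper gives no details here, deferring to an ``easy adaptation'' of Lemmas 4.1 and 5.1 of Espinar--Rosenberg, and that adaptation is precisely your combination of the Gauss equation with minimality (giving $K_{\si}\le K_{sect}\le-\kappa$ pointwise), the free-boundary identity $k_{g}=II_{\rd\calM}(T,T)\le-\alpha$ (with signs consistent with the paper's convention $II_{\rd\calM}(X,Y)=\ip{\nabla_{X}W}{Y}$ for the outward normal $W$, and with $\eta=W$ along $\rd\si$), and Gauss--Bonnet. The rigidity discussion, read off from the two pointwise equality cases, is also handled correctly.
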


Finally, for the case of Riemannian manifolds with nonnegative Ricci curvature and convex boundary we have Theorem \ref{t4}, whose proof we write below.

\begin{dem4}
It follows from Lemma $2.2$ and Theorem 3.1 of \cite{F.L} together with Theorem 2.3 of \cite{F.S1}, that there is a constant $c_1 = c_1(\mathcal{M}) > 0$ such that
\begin{align*}
|\si| &\leq c_1\frac{4\pi}{\alpha}(g + m).
\end{align*}

\noindent
Combining this with Theorems \ref{t1} and \ref{t2}, we obtain
\begin{align*}
\ind_{\ar}(\si) &\leq c_2(g + m) + \upsilon(g,m).
\end{align*}
Observe that we can bound the function $\upsilon(g,m)$ from above by a function which depends linearly of $g$ and $m$. In fact, if $\si$ is a disk then $\upsilon(g,m) = 0 < 1 = g+m$. If $\si$ is annulus, then $\upsilon(g,m) = 1 < 2 = g+m$. In all other cases we have
$$\upsilon(g,m) = 6g - 6 + 3m \leq 6(g + m).$$
Therefore the result follows.
\end{dem4}

\appendix

\section{The Riemann-Roch theorem for Riemann surfaces with boundary}\label{app}

In this section we state the version of the Riemann-Roch used in the text. We follow the appendix C of \cite{Mc.Sa}.

Let $\Pi: E \to \si$ be a complex vector bundle over a compact Riemann surface with boundary. Suppose $E$ is endowed with a hermitian structure $\ip{\cdot}{\cdot}$ and a complex structure $J : E \to E$. Given a subbundle $F \subset E|_{\rd\si}$, we say $F$ is \emph{totally real} if $F_p\perp JF_p,\, \forall\, p \in \rd\si$, and $F$ is of maximal (real) rank.

Denote by $W^{m,q}(E)$ the space of sections of $E$ of sobolev class $W^{m,q}$. We define
\begin{align*}
W_{F}^{m,q}(E) &= \{X \in W_{F}^{m,q}(E)/ X(\rd \si) \subset F\},\\
W_{F}^{m,q}(\Lambda^{0,1}\si\otimes E) &= \{\omega \in W_{F}^{m,q}(E)/ \omega(T\rd \si) \subset F\}.
\end{align*}
$$$$

Denote by $\bar{\rd} : C^{\infty}(\si,\C) \to \Lambda^{0,1}\si$ the operator obtained by composition of the exterior derivative $d : C^{\infty}(\si,\C) \to T^{*}\si\otimes\mathbb{C}$ with the projection $\Pi_{2}: \Lambda^{1,0}\si\oplus\Lambda^{0,1}\si \to \Lambda^{0,1}\si$.

\begin{defin}
A (complex linear, smooth) Cauchy-Riemann operator on the bundle $\Pi: E \to \si$ is a $\C$-linear operator
$$D: \Gamma(E) \to \Gamma(\Lambda^{0,1}\si\otimes_{\C}E)$$
which satisfies the Leibnitz rule
$$D(\phi X) = \phi(DX) + (\bar{\rd}\phi)X,$$
where $X \in \Gamma(E)$, $\phi \in C^{\infty}(\si,\C)$.
\end{defin}

\noindent
\begin{defin}
Let $l$ be a positive integer and $p > 1$ such that $lp > 2$. A real linear Cauchy-Riemann operator of class $W^{l-1,p}$ on $E$ is an operator of the form
$$D = D_0 + B,$$
where $B \in W^{l-1,p}\bigl(\Lambda^{0,1}\si\otimes\ed_{\re}(E)\bigr)$ and $D_0$ is a smooth complex linear Cauchy-Riemann operator on $E$.

\end{defin}

\begin{rem} 
Real linear Cauchy-Riemann operators satisfy the equation
$$D(\phi X) = \phi(DX) + (\bar{\rd}\phi)X,$$
only for real valued functions $\phi$.
\end{rem}

In this context we have the following theorem.

\begin{thm}[Riemann-Roch]\label{Riemroch}
Let $E \to \si$ be a complex holomorphic vector bundle over a compact Riemann surface with boundary, and $F \subset E|_{\rd\si}$ be a totally real subbundle, with $\dim_{\C}E = n$. Let $D$ be a real linear Cauchy-Riemann operator on $E$ of class $W_{F}^{l-1,p}$, where $l$ is a positive integer and $p > 1$ such that $lp > 2$. Then the following holds for every integer $k \in \{1,...,l\}$ and every real number $q > 1$ such that $k - 2/q \leq l - 2/p$:
\begin{enumerate}
\item The operators
\begin{eqnarray*}
D &:& W_{F}^{k,q}(E) \to W^{k-1,q}(\Lambda^{0,1}\si\otimes E),\\
D^{*}&:& W_{F}^{k,q}(\Lambda^{0,1}\si\otimes E) \to W^{k-1,q}(E),
\end{eqnarray*}
are Fredholm. Moreover, their kernels are independent of $k$ and $q$, and we have
\begin{equation*}
W \in \im(D) \Leftrightarrow \int_{\si}\ip{W}{W_0}\, dA, \ \forall \ W_0 \in \ker\bigl(D^{*}\bigr),
\end{equation*}
and
\begin{equation*}
V \in \im\bigl(D^{*}\bigr) \Leftrightarrow \int_{\si}\ip{V}{V_0}\, dA, \ \forall \ V_0 \in \ker(D).
\end{equation*}

\item The real Fredholm index of $\bar{\rd}$ is given by
\begin{equation}\label{fredind}
\ind{D} = n\chi(\si) + \mu(E,F),
\end{equation}

\item If $n = 1$ then
\begin{eqnarray}
\label{hyperb}\mu(E,F) < 0 \Rightarrow D \ \textrm{is injective},\\
\nonumber\\
\label{disk}\mu(E,F) + 2\chi(\si) > 0 \Rightarrow D \ \textrm{is surjective}.
\end{eqnarray}
\end{enumerate}
\end{thm}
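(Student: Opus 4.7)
The plan is to follow the exposition in McDuff--Salamon, Appendix C, with the key analytic tool being the doubling construction, which reduces everything to the classical Riemann--Roch theorem on a closed Riemann surface.

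\textbf{Fredholm property and elliptic regularity.} First I would verify that $D$ is a first-order elliptic operator and that the totally real subbundle $F \subset E|_{\rd\si}$ provides Lopatinsky--Shapiro elliptic boundary conditions. Once this is established, standard elliptic boundary theory yields the a priori estimate
\begin{equation*}
\|X\|_{W^{m,q}} \leq C\bigl(\|DX\|_{W^{m-1,q}} + \|X\|_{L^q}\bigr)
\end{equation*}
for $X \in W_F^{m,q}(E)$, from which the Fredholm property of both $D$ and $D^{*}$ follows by the usual Rellich--Kondrachov compactness argument. Elliptic regularity up to the boundary, applied to weak solutions of $DX=0$, shows that $\ker(D) \subset \bigcap_{m,q} W_F^{m,q}(E)$ and is therefore independent of the indices $(m,q)$ in the admissible range; the same holds for $\ker(D^{*})$. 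The characterizations of $\im(D)$ and $\im(D^{*})$ by $L^2$-orthogonality to the adjoint kernel are then automatic from the closed-range theorem applied to Fredholm operators.

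\textbf{Index formula via doubling.} For the index identity $\ind(D) = n\chi(\si) + \mu(E,F)$, I would use the Schottky double $\tilde\si$ of $\si$ together with its antiholomorphic involution $\tau$. The totally real boundary condition $F$ is precisely the data needed to glue two copies of $E$ across $\rd\si$ into a genuine complex holomorphic vector bundle $\tilde E \to \tilde\si$ carrying an antilinear lift $\tilde\tau$ of $\tau$; by the very definition of the Maslov index via doubling one has $c_1(\tilde E) = \mu(E,F)$. The Cauchy--Riemann operator $D$ extends to a complex-linear $\tilde D$ on $\tilde E$ (after using a homotopy within the Fredholm family of operators to deform away the antilinear part $B$, which does not alter the index), and $\ker(D)$ identifies with the real $\tilde\tau$-fixed subspace of $\ker(\tilde D)$; the same holds for cokernels. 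Consequently the real Fredholm index of $D$ equals the complex Fredholm index of $\tilde D$, and the classical Riemann--Roch theorem on the closed surface $\tilde\si$ gives $\ind_{\C}(\tilde D) = n\chi(\tilde\si)/2 + c_1(\tilde E) = n\chi(\si) + \mu(E,F)$, as required. The main technical obstacle here is the careful construction of $\tilde E$ and the verification that $\mu(E,F)$ really agrees with $c_1(\tilde E)$; this is the content of the Maslov index discussion in the reference.

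\textbf{Injectivity and surjectivity for line bundles.} When $n = 1$ the statements \eqref{hyperb} and \eqref{disk} follow from a zero-counting argument on the double. A nontrivial $X \in \ker(D)$ extends to a nonzero holomorphic section $\tilde X$ of $\tilde E$; its zeros, counted with multiplicity, are nonnegative and sum to $c_1(\tilde E) = \mu(E,F)$, forcing $\mu(E,F) \geq 0$ and proving \eqref{hyperb}. For \eqref{disk} I would invoke the Serre-type duality that identifies $\mathrm{coker}(D)$ with the kernel of a real-linear Cauchy--Riemann operator on the dual line bundle $E^{*}\otimes\Lambda^{1,0}\si$ with the induced totally real boundary data $F^{\perp}$; a direct Maslov-index computation gives $\mu(E^{*}\otimes\Lambda^{1,0}\si, F^{\perp}) = -\mu(E,F) - 2\chi(\si)$, so under the hypothesis $\mu(E,F) + 2\chi(\si) > 0$ the dual operator has strictly negative Maslov index, and \eqref{hyperb} applied to the dual yields $\ker(D^{*}) = 0$, hence surjectivity of $D$. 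The delicate point to track carefully is the sign convention in the duality and the shift by $2\chi(\si)$ coming from the twist by $\Lambda^{1,0}\si$.
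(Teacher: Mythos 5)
This theorem is stated in the paper's appendix without proof, being quoted directly from Appendix C of \cite{Mc.Sa}, and your outline reproduces essentially the argument of that reference: ellipticity of the totally real boundary condition for the Fredholm, regularity and duality statements; the doubling construction with the identification $c_1(\tilde E)=\mu(E,F)$ plus classical Riemann--Roch on the closed double for the index formula; and zero-counting together with duality for the line-bundle vanishing statements (with the minor caveat that kernel elements of a \emph{real} linear Cauchy--Riemann operator are not literally holomorphic, so the positivity and finiteness of their zeros must be justified by the Carleman similarity principle, as is done in the reference). Hence the proposal is correct and follows the same route as the paper's source.
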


\begin{rem} 
Here $\mu(E,F)$ denotes the boundary Maslov index of the pair $(E,F)$. We will not the define this invariant in all its generality (for this matter see section C.$3$ on the appendix C of \cite{Mc.Sa}), instead we will calculate it in the particular case needed here.
\end{rem}

\noindent
\label{ex}{\bf Example}: Let $\si$ be a Riemann surface with non-empty boundary and denote by $J$ its complex structure. Then its possible to endow the double $\widetilde\si$ of $\si$ with a complex structure $\widetilde{J}$ which is symmetric in the following sense: there exists an antiholomorphic diffeomorphism $S: \widetilde\si \to \widetilde\si$, such that $S^2 = Id$. If $(\widehat{\si},\widehat{J})$ is an exact duplicate of $(\si,J)$, then $S$ is defined by $S(x) = \widehat{x}, \ x \in \si$. For this construction see pages $264$ and $265$ of \cite{D.H.T}.

Consider the vector bundle $E = \Lambda^{0,1}\widetilde\si$. We have $E|_{\si} = \Lambda^{0,1}\si$, $E|_{\widehat{\si}} = \Lambda^{0,1}\widehat{\si}$. Denote $\gamma = \rd\si =\rd\widehat{\si}$, and consider the subbundle $F$ of $E|_{\gamma}$ whose sections in local isothermal coordinates are given by $f\partial_{\bar{z}}$, where $f$ is a real function. So, $F$ is a totally real subbundle.

By theorem $C.3.10$ in \cite{Mc.Sa} the first Chern number of $E$ satisfies
\begin{equation}
2\ip{c_{1}(E)}{[\widetilde\si]} = \mu(E|_{\si},F) + \mu(E|_{\widehat{\si}},F).
\end{equation}
On the other hand,
\begin{equation}
\ip{c_{1}(E)}{[\si]} = \chi(\widetilde\si) = 2\chi(\si).
\end{equation} 

\noindent
Moreover, since $E|_{\si}$ and $E|_{\widehat{\si}}$ are isomorphic, we have $\mu(E|_{\si},F) = \mu(E|_{\widehat{\si}},F)$. Therefore,
\begin{equation}
\mu(\Lambda^{0,1}\si,F) = 2\chi(\si).
\end{equation}

Now, suppose that $\si$ is endowed with a Hermitian metric, and extend it to all the bundles of tensors over $\si$. Consider the operator 
$$\bar{\rd}: W_{F}^{k,q}(\Lambda^{0,1}\si) \to W^{k-1,q}(\Lambda^{0,1}\si\otimes\Lambda^{0,1}\si).$$
Observe that $\Ker(\bar{\rd})$ coincides with the space of holomorphic sections of $T^{0,1}\si$ which are real on the boundary. We denote $\dim\Ker(\bar{\rd}) = 2h^{0}(\Lambda^{0,1}\si)$.

The metric gives rise to a trivialization of $\Lambda^{1,0}\si\otimes \Lambda^{0,1}\si$ and to a duality between $L^2(\Lambda^{0,1}\si\otimes\Lambda^{0,1}\si)$ and $L^2(T^{0,1}\si\otimes \Lambda^{1,0}\si)$. With respect to this duality, the adjoint of $\bar{\rd}$ is
$$-\bar{\rd}: W_{F}^{k,q}(T^{0,1}\si\otimes \Lambda^{1,0}\si) \to W^{k-1,q}(T^{0,1}\si\otimes \Lambda^{1,0}\si\otimes \Lambda^{0,1}\si),$$
so $\Ker(\bar{\rd})^*$ coincides with the space of holomorphic sections of $T^{0,1}\si\otimes \Lambda^{1,0}\si$ which are real on the boundary. Denote $\dim\Ker(\bar{\rd})^* = 2h^0(T^{0,1}\si\otimes \Lambda^{1,0}\si)$.

In view of all of this, the equation \eqref{fredind} on Theorem \ref{Riemroch} can be rewritten as
\begin{equation}
2h^{0}(\Lambda^{0,1}\si) - 2h^0(T^{0,1}\si\otimes \Lambda^{1,0}\si) = 3\chi(\si).
\end{equation}


\begin{thebibliography}{20}
\bibitem{A.C.S1} L. Ambrozio, A. Carlotto, B. Sharp. {\it Compactness of the space of minimal hypersurfaces with bounded volume and p-th Jacobi eigenvalue}. J. Geom. Anal. 26 (2016), no. 4, 2591-2601.

\bibitem{A.C.S3} L. Ambrozio, A. Carlotto, B. Sharp. {\it Index estimates for free boundary minimal hypersurfaces}. Math. Ann. 370 (2018), no. 3-4, 1063-1078.

\bibitem{A.C.S4} L. Ambrozio, A. Carlotto, B. Sharp. {\it Compactness analysis for free boundary minimal hypersurfaces}. Calc. Var. Partial Differential Equations 57 (2018), no. 1, Art. 22, 39 pp

\bibitem{B.S} R. Buzano, B. Sharp. {\it Qualitative and quantitative estimates for minimal hypersurfaces with bounded index and area}. Trans. Amer. Math. Soc. 370 (2018), 4373-4399

\bibitem{C} A. Carlotto. {\it Generic finiteness of minimal surfaces with bounded Morse index}. Ann. Sc. Norm. Super. Pisa Cl. Sci. (5) 17 (2017), no. 3, 1153-1171.

\bibitem{C.T}  S.Y. Cheng, J. Tysk. {\it Schr\"odinger operators and index bounds for minimal submanifolds}. Rocky Mountain J. Math. 24 (1994), no. 3, 977-996.

\bibitem{C.F} J. Chen, A. Fraser. {\it Holomorphic variations of minimal disks with boundary on a Lagrangian surface}. Canad. J. Math. 62 (2010), no. 6, 1264-1275.

\bibitem{C.F.P} J. Chen, A. Fraser, C. Pang. {\it Minimal immersions of compact bordered Riemann surfaces with free boundary}. Trans. Amer. Math. Soc. 367 (2015), no. 4, 2487-2507.

\bibitem{C.K.M} O. Chodosh, D. Ketover, D. Maximo. {\it Minimal hypersurfaces with bounded index}. Invent. Math. 209 (2017), no. 3, 617-664.

\bibitem{C} R. Courant. {\it The existence of minimal surfaces of given topological structure under prescribed boundary conditions}. Acta Math. 72 (1940), 51-98.

\bibitem{DL.R} C. De Lellis, J. Ramic. {\it Min-max theory for minimal hypersurfaces with boundary}.  Ann. Inst. Fourier (Grenoble) 68 (2018), no. 5, 1909-1986.

\bibitem{D.H.T} U. Dierkes, S. Hildebrandt. A. J. Tromba. {\it Global analysis of minimal surfaces}. Revised and enlarged second edition. Grundlehren der Mathematischen Wissenschaften [Fundamental Principles of Mathematical Sciences], 341. Springer, Heidelberg, 2010. xvi+537 pp. ISBN: 978-3-642-11705-3

\bibitem{D} B. Devyver. {\it Index of the critical catenoid}.  Geom. Dedicata 199 (2019), 355-371.

\bibitem{Do} J. Douglas. {\it Minimal surfaces of higher topological structure}. Ann. of Math. (2) 40 (1939) 205-298

\bibitem{E} N. Edelen. {\it Convexity estimates for mean curvature flow with free boundary.} Adv. Math. 294 (2016), 1-36.

\bibitem{E.M} N. Ejiri, M. Micallef. {\it Comparison between second variation of area and second variation of energy of a minimal surface}. Adv. Calc. Var. 1 (2008), no. 3, 223-239.

\bibitem{E.R} J. Espinar, H. Rosenberg. {\it Area estimates and rigidity of capillary $H-$surfaces in three-manifolds with boundary}.  Math. Z. 289 (2018), no. 3-4, 1261-1279. 

\bibitem{F.K} H. M. Farkas, I. Kra. {\it Riemann surfaces}. Second edition. Graduate Texts in Mathematics, 71. Springer-Verlag, New York, 1992. xvi+363 pp. ISBN: 0-387-97703-1

\bibitem{F1} A. Fraser. {\it On the free boundary variational problem for minimal disks}. Comm. Pure Appl. Math. 53 (2000), no. 8, 931-971.

\bibitem{F.L} A. Fraser, M. Li. {\it Compactness of the space of embedded minimal surfaces with free boundary in three-manifolds with nonnegative Ricci curvature and convex boundary}. J. Differential Geom. 96 (2014), no. 2, 183-200.

\bibitem{F.S1} A. Fraser, R. Schoen. {\it The first Steklov eigenvalue, conformal geometry, and minimal surfaces}. Adv. Math. 226 (2011), no. 5, 4011-4030.

\bibitem{F.S2} A. Fraser, R. Schoen. {\it Sharp eigenvalue bounds and minimal surfaces in the ball}. Invent. Math. 203 (2016), no. 3, 823-890.

\bibitem{Gr} P. Greiner. {\it An asymptotic expansion for the heat equation}. Arch. Rational Mech. Anal. 41 (1971), 163-218.

\bibitem{G.J} M. Gr\"uter and J. Jost. {\it On embedded minimal disks in convex bodies.
Ann. Inst. H. Poincar\'e}. Anal. Non Lin\'eaire 3 (1986), no. 5, 345-390.

\bibitem{G.Z} Q. Guang, X. Zhou. {\it Compactness and generic finiteness for free boundary minimal hypersurfaces}.  Pacific J. Math. 310 (2021), no. 1, 85-114. 

\bibitem{H.Z} H. Li, X. Zhou. {\it Existence of minimal surfaces of arbitrarily large Morse index}. Calc. Var. Partial Differential Equations 55 (2016), no. 3, Art. 64, 12 pp. 

\bibitem{H.S.U} H. Hess, R. Schrader, D. A. Uhlenbrock. {\it Kato's inequality and the spectral distribution of Laplacians on compact Riemannian manifolds.}  J. Differential Geom. 15 (1980), no. 1, 27–37 (1981).

\bibitem{H.S} D. Hoffman, J. Spruck. {\it Sobolev and isoperimetric inequalities for Riemannian submanifolds}. Comm. Pure Appl. Math. 27 (1974), 715-727.

\bibitem{J2} J. Jost. {\it Two-dimensional geometric variational problems}. J. Wiley and Sons, Chichester, N.Y. (1991). MR1100926 (92h:58045)

\bibitem{K.L} N. Kapouleas, M. Li. {\it Free boundary minimal surfaces in the unit three-ball via desingularization of the critical catenoid and the equatorial disc}. J. Reine Angew. Math. 776 (2021), 201-254. 

\bibitem{K.W} N. Kapouleas, D. Wiygul. {\it Free-boundary minimal surfaces with connected boundary in the 3-ball by tripling the equatorial disc}. arXiv:1711.00818, to appear in Journal of Differential Geometry.

\bibitem{K} D. Ketover. {\it Free boundary minimal surfaces of unbounded genus}. arXiv:1612.08692 [math.DG]

\bibitem{L.Y} Peter Li and Shing-Tung Yau. {\it On the Schr\"odinger equation and the eigenvalue problem}. Comm. Math. Phys., 88, no.3 (1983), 309-318.

%\bibitem{L} M. Li. {\it A general existence theorem for embedded minimal surfaces with free boundary}. Comm. Pure Appl. Math. 68 (2015), no. 2, 286-331.

\bibitem{L.Z} M. Li, X. Zhou. {\it Min-max theory for free boundary minimal hypersurfaces I - regularity theory}.  J. Differential Geom. 118 (2021), no. 3, 487-553.

\bibitem{M.N} F.C. Marques, A. Neves. {\it Existence of infinitely many minimal hypersurfaces in positive Ricci curvature}. Invent. Math. 209 (2017), no. 2, 577-616.

\bibitem{M.N.S} D. M\'aximo, I. Nunes, G. Smith. {\it Free boundary minimal annuli in convex three-manifolds}. J. Differential Geom. 106 (2017), no. 1, 139-186.

\bibitem{Mc.Sa} D. McDuff, D. Salamon. {\it J-holomorphic curves and symplectic topology}. Second edition. American Mathematical Society Colloquium Publications, 52. American Mathematical Society, Providence, RI, 2012. xiv+726 pp. ISBN: 978-0-8218-8746-2

\bibitem{M.Y} W. Meeks and S.T. Yau. {\it Topology of three-dimensional manifolds and the embedding problems in minimal surface theory}. Ann. of Math. (2) 112 (1980), no. 3, 4414-84.

%\bibitem{M.S} J. H. Michael, L. M. Simon. {\it Sobolev and mean-value inequalities on generalized submanifolds of $R^n$}. Comm. Pure Appl. Math. 26 (1973), 361-379.

\bibitem{F.P.Z} A. Folha, F. Pacard, T. Zolotareva. {\it Free boundary minimal surfaces in the unit 3-ball}.  Manuscripta Math. 154 (2017), no. 3-4, 359-409.

\bibitem{S.U1} J. Sacks, K. Uhlenbeck. {\it The existence of minimal immersions of
2-spheres}. Ann. of Math. (2) 113 (1981) 1-24.

\bibitem{S.U2} J. Sacks, K. Uhlenbeck. {\it Minimal immersions of closed Riemann surfaces}. Trans. Amer. Math. Soc. 271 (1982), no. 2, 639-652. 

\bibitem{S} P. Sargent. {Index bounds for free boundary minimal surfaces of convex bodies}. Proc. Amer. Math. Soc. 145 (2017), no. 6, 2467-2480.

\bibitem{Savo} A. Savo, {\it The Bochner formula for isometric immersions}. Pacific J. Math. 272 (2014), no. 2, 395-422.

\bibitem{Sh} B. Sharp. {\it Compactness of minimal hypersurfaces with bounded index}. J. Differential Geom. 106 (2017), no. 2, 317-339.

\bibitem{S.Y} R. Schoen, S.T. Yau. {\it Existence of incompressible minimal surfaces and the topology of three-dimensional manifolds with nonnegative scalar curvature}. Ann. of Math. (2) 110 (1979), no. 1, 127-142. 

\bibitem{S.Z} G. Smith, D. Zhou. {\it The Morse index of the critical catenoid}.  Geom. Dedicata 201 (2019), 13-19.

\bibitem{S.S.T.Z} G. Smith, A. Stern, H. Tran, D. Zhou. {\it On the Morse index of higher-dimensional free boundary minimal catenoids}. arXiv:1709.00977 [math.DG]

\bibitem{St} M. Struwe. {\it On a free boundary problem for minimal surfaces}. Invent. Math. 75 (1984), no. 3, 547-560.

\bibitem{T} H. Tran. {\it Index Characterization for Free Boundary Minimal Surfaces}.  Comm. Anal. Geom. 28 (2020), no. 1, 189-222.

\bibitem{T.T} F. Tomi, A.J. Tromba. {\it Existence theorems for minimal surfaces of non-zero genus spanning a contour}. Mem. Amer. Math. Soc. 71 (1988) Number 382

\bibitem{W} R. O. Wells, Jr. {\it Differential Analysis on Complex Manifolds}. Third edition. With a new appendix by Oscar Garcia-Prada. Graduate Texts in Mathematics, 65. Springer, New York, 2008. xiv+299 pp. ISBN: 978-0-387-73891-8

\bibitem{Wh} B. White, {\it On the compactness theorem for embedded minimal surfaces in 3-manifolds with locally bounded area and genus}. Comm. Anal. Geom. 26 (2018), no. 3, 659-678. 

\bibitem{Z2} X. Zhou. {\it On the existence of min-max minimal surface of genus $g \geq 2$}. Commun. Contemp. Math. 19 (2017), no. 4, 1750041, 36 pp.

\end{thebibliography}
\end{document}